\documentclass[12pt]{article}
\usepackage{amssymb, amsmath, latexsym, a4}
\usepackage[latin1]{inputenc}
\usepackage{graphicx}
\usepackage[all]{xy}
\usepackage[usenames]{color}

\newenvironment{proof}[1][Proof]{\noindent\textbf{#1.} }{\ \rule{0.5em}{0.5em}}

\newtheorem{De}{Definition}[section]
\newtheorem{Th}[De]{Theorem}
\newtheorem{Pro}[De]{Proposition}
\newtheorem{Le}[De]{Lemma}
\newtheorem{Co}[De]{Corollary}
\newtheorem{Rem}[De]{Remark}
\newtheorem{Ex}[De]{Example}

\newcommand{\Image}{{\sf Im}}
\newcommand{\Ker}{{\sf Ker}}

\newcommand{\Lie}{\ensuremath{\mathsf{Lie}}}

\newcommand{\Lieh}{\ensuremath{\mathfrak{h}}}
\newcommand{\Lieg}{\ensuremath{\mathfrak{g}}}
\newcommand{\Liet}{\ensuremath{\mathfrak{t}}}
\newcommand{\Lieq}{\ensuremath{\mathfrak{q}}}

\newcommand{\Liem}{\ensuremath{\mathfrak{m}}}
\newcommand{\LieK}{\ensuremath{\mathcal{K}}}
\newcommand{\LieL}{\ensuremath{\mathcal{L}}}
\newcommand{\LieH}{\ensuremath{\mathcal{H}}}
\newcommand{\LieS}{\ensuremath{\mathfrak{S}}}
\newcommand{\LieM}{\ensuremath{\mathfrak{M}}}
\newcommand{\LieI}{\ensuremath{\mathfrak{I}}}
\newcommand{\Lien}{\ensuremath{\mathfrak{n}}}

\newcommand{\Lies}{\ensuremath{\mathfrak{s}}}

\newcommand{\Leib}{\ensuremath{\mathsf{Leib}}}

\newcommand{\ze}{{\cal Z}}

\hyphenation{cat-e-go-ri-cal e-quiv-a-len-ces co-ker-nels gen-er-al-ised ex-act-ness ex-ten-sion Ja-ne-lid-ze pro-jec-tive nil-po-tent co-ker-nel}

\newbox\pullbackbox
\setbox\pullbackbox=\hbox{\xy 0;<1mm,0mm>: \POS(4,0)\ar@{-} (0,0) \ar@{-} (4,4)
\endxy}

\newdir{>>}{{}*!/3.5pt/:(1,-.2)@^{>}*!/3.5pt/:(1,+.2)@_{>}*!/7pt/:(1,-.2)@^{>}*!/7pt/:(1,+.2)@_{>}}
\newdir{ >>}{{}*!/8pt/@{|}*!/3.5pt/:(1,-.2)@^{>}*!/3.5pt/:(1,+.2)@_{>}}
\newdir{ |>}{{}*!/-3.5pt/@{|}*!/-8pt/:(1,-.2)@^{>}*!/-8pt/:(1,+.2)@_{>}}
\newdir{ >}{{}*!/-8pt/@{>}}
\newdir{>}{{}*:(1,-.2)@^{>}*:(1,+.2)@_{>}}
\newdir{<}{{}*:(1,+.2)@^{<}*:(1,-.2)@_{<}}

  \newcommand{\eh}{\frak h}

\begin{document}

\centerline{\bf  A STUDY OF  $n$-$\Lie$-ISOCLINIC LEIBNIZ ALGEBRAS}

\bigskip
\centerline{\bf G. R. Biyogmam$^1$ and J. M. Casas$^2$}

\bigskip
\centerline{$^1$ Department of Mathematics, Georgia College \& State University}
\centerline{Campus Box 17 Milledgeville, GA 31061-0490}
\centerline{ {E-mail address}: guy.biyogmam@gcsu.edu}
\bigskip

\centerline{$^2$Dpto. Matemática Aplicada, Universidad de Vigo,  E. E. Forestal}
\centerline{Campus Universitario A Xunqueira, 36005 Pontevedra, Spain}
\centerline{ {E-mail address}: jmcasas@uvigo.es}
\bigskip

\date{}

\bigskip \bigskip \bigskip

{\bf Abstract:}  In this paper we  introduce the concept of $n$-\Lie-isoclinism on non-Lie Leibniz algebras. Among the results obtained, we provide several characterizations of  $n$-\Lie-isoclinic classes of Leibniz algebras.
Also, we provide a characterization of $n$-\Lie-stem Leibniz algebras, and prove that every $n$-{\Lie}-isoclinic class of Leibniz algebras contains a $n$-\Lie-stem Leibniz algebra.

\bigskip

{\bf 2010 MSC:} 17A32, 17B30.
\bigskip

{\bf Key words:} $n$-{\Lie} isoclinism; $n$-{\Lie}-stem Leibniz algebras, $n$-{\Lie}-central extension, $n$-{\Lie} center.

%%%%%%%%%%%%%%%%%%%%%%%%%%%%%%%%%%%%%%%%%%%%%%%%%%%%%%%%%%%%%%%%%%%%%%%%%%%%%%%%%%%%%%%%%%%%%%%%%%%%%%%%%%%%%%%%%%%%%%%%%%%

\section{Introduction}

The concept of isoclinism was introduced  in 1939 by P. Hall in his  attempt to classify $p$-groups using an equivalence relation weaker than the notion of isomorphism \cite{PH}.
Later himself generalized the notion of isoclinism to that of isologism in \cite{PH1}, which is in fact isoclinism with respect to a certain variety of groups. When the
variety of all the trivial groups is considered, then the  notion of isomorphism is recovered. When  the chosen variety is the variety of all abelian groups, then the notion of isoclinism is recovered. When  the variety of all nilpotent groups of class at most $n$ is considered, then arises the  notion  of $n$-isoclinism.

 This concept has been widely studied in several algebraic structures. For example, it is worth mentioning \cite{Bio, ERR, He, MSCh, vdW} in case of groups; \cite{HG} in case of pairs of groups; in case of Lie algebras \cite{PMKh, Sal3} and \cite{MSTV} in the case of pairs of Lie algebras.

  Recently,  the concept of isoclinism of Lie algebras has been considered in the relative context, given rise to the notion of {\Lie}-isoclinism  of Leibniz algebras \cite{BC}.  Relative means that the  notion of {\Lie}-isoclinism arises through the Liezation functor  $(-)_{\Lie}: {\sf Leib} \to {\sf Lie}$ which assigns the Lie algebra $\Lieg_{\Lie} = \Lieg/<\{[x,x]: x \in \Lieg\}>$ to a given Leibniz algebra $\Lieg$, while the classical notion of isoclinism arises through the abelianization functor, which assigns to a Leibniz algebra {\Lieg} the abelian Leibniz algebra $\frac{\Lieg}{[{\Lieg}, {\Lieg}]}$. This philosophy comes from the categorical theory of central extensions relative to a chosen Birkhoff subcategory of a semi-abelian category. We refer to  \cite{BC, CKh, CVDL1} and references given therein for a detailed explanation.

Our goal in this article is to study the notion of $n$-isoclinism relative to the Liezation functor, that is we consider the variety  of {\Lie}-nilpotent non-Lie Leibniz algebras of class at most $n$, called as $n$-{\Lie}-isoclinism, which is an equivalence relation between two Leibniz algebras $\Lieg_1$ and $\Lieg_2$ for which there exist two isomorphisms $\eta : \frac{\Lieg_1}{{\ze}_{n}^{\Lie}({\Lieg_1})} \to \frac{\Lieg_2}{{\ze}_{n}^{\Lie}({\Lieg_2})}$ and $\xi : \gamma_{n+1}^{\Lie}(\Lieg_1) \to \gamma_{n+1}^{\Lie}(\Lieg_2)$ such that the following diagram is commutative:
\begin{equation*}  \label{square isoclinic}
\xymatrix{
\frac{\Lieg_1}{{\ze}_{n}^{\Lie}({\Lieg_1})}  \times \stackrel{n+1} \ldots\times \frac{\Lieg_1}{{\ze}_{n}^{\Lie}({\Lieg_1})}   \ar[r]^-*+{C_1^{n+1}} \ar[d]_{\eta^{ n+1}}  & \gamma_{n+1}^{\Lie}(\Lieg_1)  \ar[d]^{\xi}\\
\frac{\Lieg_2}{{\ze}_{n}^{\Lie}({\Lieg_2})}  \times \stackrel{n+1} \ldots\times \frac{\Lieg_2}{{\ze}_{n}^{\Lie}({\Lieg_2})}    \ar[r]^-*+{C_2^{n+1}} & \gamma_{n+1}^{\Lie}(\Lieg_2) }
\end{equation*}
where  $C_1^{n+1}(\bar{x}_1,\ldots,\bar{x}_{n+1})=[[[x_1,x_2]_{lie},x_3]_{lie},\ldots,x_{n+1}]_{lie}$ with $x_1,\ldots,x_{n+1}\in\Lieg_1$ and   $\bar{x}:=x+ {\ze}_{n}^{\Lie}({\Lieg_1}),$ and $C_2^{n+1}(\tilde{y}_1,\ldots,\tilde{y}_{n+1})=[[[y_1,y_2]_{lie},y_3]_{lie},\ldots,y_{n+1}]_{lie}$ with \\$y_1,\ldots,y_{n+1}\in\Lieg_2$  and $\tilde{y}:=y+ {\ze}_{n}^{\Lie}({\Lieg_2})$ (see Definitions \ref{isoclinic} and \ref{iso ext}).

This paper is organized as follows: In section \ref{preliminaries}, we present some generalities and preliminaries. In section \ref{Lie iso Lb alg}, we define the notion of $n$-{\Lie} isoclinism between {\Lie}-central extensions of Leibniz algebras and we prove  the corresponding relative results and characterizations of  several classical results on isoclinism of Lie algebras  in this framework  such as any Leibniz algebra is $n$-{\Lie}-isoclinic to some Leibniz algebra {\Lieh} satisfying $Z_{\Lie}({\Lieh}) \cap \gamma_n^{\Lie}({\Lieh}) \subseteq \gamma_{n+1}^{\Lie}({\Lieh})$  or a homomorphism of $\Lie$-central extensions $(\alpha, \beta, \gamma) : (g_1) \to (g_2)$ is  $n$-$\Lie$-isoclinic if and only if $\gamma$ is an isomorphism and $\Ker(\beta) \cap \gamma_{n+1}^{\Lie}(\Lieg_1) =0$. In section \ref{stem section}, we study properties of $n$-{\Lie} isoclinism of {\Lie}-stem Leibniz algebras proving that every Leibniz algebra is $n$-{\Lie}-iscolinic to some $n$-{\Lie}-stem Leibniz algebra or that two $n$-{\Lie}-isoclinic $n$-{\Lie}-stem  Leibniz algebras have isomorphic $n$-{\Lie} centers.

%%%%%%%%%%%%%%%%%%%%%%%%%%%%%%%%%%%%%%%%%%%%%%%%%%%%%%%%%%%%%%%%%%%%%%%%%%%%%%%%%%%%%%%%%%%%%%%%%%%%%%%%%%%%%%%%%%%%%%%%%%

\section{Preliminary results on Leibniz algebras} \label{preliminaries}
Let $\mathbb{K}$ be a fix ground field such that $\frac{1}{2} \in \mathbb{K}$. Throughout the paper, all vector spaces and tensor products are considered over $\mathbb{K}$.

A \emph{Leibniz algebra} \cite{Lo 1, Lo 2} is a vector space ${\Lieg}$  equipped with a bilinear map $[-,-] : \Lieg \otimes \Lieg \to \Lieg$, usually called the \emph{Leibniz bracket} of ${\Lieg}$,  satisfying the \emph{Leibniz identity}:
\[
 [x,[y,z]]= [[x,y],z]-[[x,z],y], \quad x, y, z \in \Lieg.
\]

 A subalgebra ${\eh}$ of a Leibniz algebra ${\Lieg}$ is said to be \emph{left (resp. right) ideal} of ${\Lieg}$ if $ [h,g]\in {\eh}$  (resp.  $ [g,h]\in {\eh}$), for all $h \in {\eh}$, $g \in {\Lieg}$. If ${\eh}$ is both
left and right ideal, then ${\eh}$ is called \emph{two-sided ideal} of ${\Lieg}$. In this case $\Lieg/\Lieh$ naturally inherits a Leibniz algebra structure.

Given a Leibniz algebra ${\Lieg}$, we denote by ${\Lieg}^{\rm ann}$ the subspace of ${\Lieg}$ spanned by all elements of the form $[x,x]$, $x \in \Lieg$. It is clear that the quotient ${\Lieg}_ {_{\rm Lie}}=\Lieg/{\Lieg}^{\rm ann}$ is a Lie algebra. This defines the so-called  \emph{Liezation functor} $(-)_{\Lie} : {\Leib} \to {\Lie}$, which assigns to a Leibniz algebra $\Lieg$ the Lie algebra ${\Lieg}_{_{\rm Lie}}$. Moreover, the canonical epimorphism  ${\Lieg} \twoheadrightarrow {\Lieg}_ {_{\rm Lie}}$ is universal among all homomorphisms from $\Lieg$ to a Lie algebra, implying that the Liezation functor is left adjoint to the inclusion functor $ {\Lie} \hookrightarrow {\Leib}$.

Given a Leibniz algebra $\Lieg,$ we define the bracket $$[-,-]_{lie}:\Lieg\to \Lieg, ~~ \text{by} ~~ [x,y]_{lie}=[x,y]+[y,x], ~~ \text{for} ~~ x,y\in\Lieg.$$

Let  ${\Liem}$, ${\Lien}$ be two-sided ideals of a Leibniz algebra  ${\Lieg}$. The following notions come from \cite{CKh}, which were derived from \cite{CVDL1}.

 The \emph{$\Lie$-commutator} of  ${\Liem}$ and  ${\Lien}$ is the two-sided ideal  of $\Lieg$
\[
[\Liem,\Lien]_{\Lie}= \langle \{[m,n]_{lie}, m \in \Liem, n \in \Lien \}\rangle.
\]

The \emph{$\Lie$-center} of the Leibniz algebra $\Lieg$ is the two-sided ideal
\[
Z_{\Lie}(\Lieg) =  \{ z\in \Lieg\,|\,\text{$[q,z]_{lie}=0$ for all $q\in \Lieg$}\}.
\]

 The \Lie-{\it centralizer} of ${\Liem}$ and ${\Lien}$ over  ${\Lieg}$ is
\[
C_{\Lieg}^{\Lie}({\Liem} , {\Lien}) = \{g \in {\Lieg} \mid  \; [g, m]_{lie} \in {\Lien}, \; \text{for all} \;
m \in {\Liem} \} \; .
\]

\begin{De} \cite{CKh}
Let ${\Lien}$ be a two-sided ideal of a Leibniz algebra $\Lieg$. The lower $\Lie$-central series of $\Lieg$ relative to ${\Lien}$ is the sequence
\[
\cdots \trianglelefteq {\gamma_i^{\Lie}(\Lieg,\Lien)} \trianglelefteq \cdots \trianglelefteq \gamma_2^{\Lie}(\Lieg,\Lien)  \trianglelefteq {\gamma_1^{\Lie}(\Lieg,\Lien)}
\]
of two-sided ideals of $\Lieg$ defined inductively by
\[
{\gamma_1^{\Lie}(\Lieg,\Lien)} = {\Lien} \quad \text{and} \quad \gamma_i^{\Lie}(\Lieg,\Lien) =[{\gamma_{i-1}^{\Lie}(\Lieg,\Lien)},{\Lieg}]_{\Lie}, \quad   i \geq 2.
\]
\end{De}

We use the notation $\gamma_i^{\Lie}(\Lieg)$ instead of $\gamma_i^{\Lie}(\Lieg,\Lieg), 1 \leq i \leq n$.

If $\varphi : \Lieg \to \Lieq$ is a homomorphism of Leibniz  such that $\varphi({\Liem}) \subseteq {\Lien}$, where ${\Liem}$ is a two-sided ideal of ${\Lieg}$ and ${\Lien}$ a two-sided ideal of ${\Lieq}$, then $\varphi(\gamma_{i}^{\Lie}({\Lieg}, {\Liem})) \subseteq \gamma_{i}^{\Lie}({\Lieq}, {\Lien}), i \geq 1$.

\begin{De}
The Leibniz algebra $\Lieg$ is said to be $\Lie$-nilpotent relative to $\Lien$ of class $c$ if\ $\gamma_{c+1}^{\Lie}(\Lieg, \Lien) = 0$ and $\gamma_c^{\Lie}(\Lieg, \Lien) \neq 0$.
\end{De}

\begin{De} \cite{CKh}
The upper $\Lie$-central series of a Leibniz algebra ${\Lieg}$ is the sequence of two-sided ideals, called $i$-{\Lie} centers, i=0, 1, 2,  \dots,
 \[
{\ze}_0^{\Lie}({\Lieg}) \trianglelefteq {\ze}_1^{\Lie}({\Lieg}) \trianglelefteq \cdots \trianglelefteq {\ze}_i^{\Lie}({\Lieg}) \trianglelefteq \cdots
\]
 defined inductively by
\[
{\ze}_0^{\Lie}({\Lieg}) = 0 \quad \text{and} \quad
 {\ze}_{i}^{\Lie}({\Lieg}) = C_{\Lieg}^{\Lie}({\Lieg},{\ze}_{i-1}^{\Lie}({\Lieg})) , \  i \geq 1 .
 \]
\end{De}

\begin{De} \cite[Definition 3.2, Proposition 3.4]{BC1}
An exact sequence of Leibniz algebras $0 \to \Lien \to \Lieg \stackrel{\pi} \to \Lieq \to 0$ is said to be \emph{$n$-$\Lie$-central} extension if $\gamma_{n+1}^{\Lie}({\Lieg}, {\Lien})=0$, equivalently
$\Lien \subseteq {\ze}_{n}^{\Lie}(\Lieg)$.
\end{De}

%%%%%%%%%%%%%%%%%%%%%%%%%%%%%%%%%%%%%%%%%%%%%%%%%%%%%%%%%%%%%%%%%%%%%%%%%%%%%%%%%%%%%%%%%%%%%%%%%%%%%%%%%%%%%%%%%%%%%%%%%%%%%%%

\section{$n$-$\Lie$-isoclinic Leibniz algebras} \label{Lie iso Lb alg}

Let ${\Lieg_1}$  and  ${\Lieg_2}$ be two Leibniz algebras, then we can construct the following $n$-$\Lie$-central extensions

\begin{equation} \label{Lie central extension}
(g_i) : 0 \to  {\ze}_{n}^{\Lie}({\Lieg_i})\stackrel{}\to {\Lieg}_i \stackrel{} \to {\Lieg_i}/ {\ze}_{n}^{\Lie}({\Lieg_i}) \to 0, ~ i=1, 2.
\end{equation}

\begin{De} \label{isoclinic}
The $\Lie$-central extensions $(g_1)$ and $(g_2)$ are said to be $n$-\Lie-isoclinic whenever there exist two isomorphisms $\eta : \frac{\Lieg_1}{{\ze}_{n}^{\Lie}({\Lieg_1})} \to \frac{\Lieg_2}{{\ze}_{n}^{\Lie}({\Lieg_2})}$ and $\xi : \gamma_{n+1}^{\Lie}(\Lieg_1) \to \gamma_{n+1}^{\Lie}(\Lieg_2)$ such that the following diagram is commutative:
\begin{equation}  \label{square isoclinic}
\xymatrix{
\frac{\Lieg_1}{{\ze}_{n}^{\Lie}({\Lieg_1})}  \times \stackrel{n+1} \ldots\times \frac{\Lieg_1}{{\ze}_{n}^{\Lie}({\Lieg_1})}   \ar[r]^-*+{C_1^{n+1}} \ar[d]_{\eta^{ n+1}}  & \gamma_{n+1}^{\Lie}(\Lieg_1)  \ar[d]^{\xi}\\
\frac{\Lieg_2}{{\ze}_{n}^{\Lie}({\Lieg_2})}  \times \stackrel{n+1} \ldots\times \frac{\Lieg_2}{{\ze}_{n}^{\Lie}({\Lieg_2})}    \ar[r]^-*+{C_2^{n+1}} & \gamma_{n+1}^{\Lie}(\Lieg_2) }
\end{equation}
where  $C_1^{n+1}(\bar{x}_1,\ldots,\bar{x}_{n+1})=[[[x_1,x_2]_{lie},x_3]_{lie},\ldots,x_{n+1}]_{lie}$ with $x_1,\ldots,x_{n+1}\in\Lieg_1$ and   $\bar{x}:=x+ {\ze}_{n}^{\Lie}({\Lieg_1}),$ and $C_2^{n+1}(\tilde{y}_1,\ldots,\tilde{y}_{n+1})=[[[y_1,y_2]_{lie},y_3]_{lie},\ldots,y_{n+1}]_{lie}$ with \\$y_1,\ldots,y_{n+1}\in\Lieg_2$  and $\tilde{y}:=y+ {\ze}_{n}^{\Lie}({\Lieg_2}).$ So the map $\xi$ is given by $\xi([[[x_1,x_2]_{lie},x_3]_{lie},$ $\ldots,x_{n+1}]_{lie})=[[[y_1,y_2]_{lie},y_3]_{lie},\ldots,y_{n+1}]_{lie}$, with $x_i\in\Lieg_1$ and $y_i\in\eta(\bar{x}_i)$, for $i=1,\ldots,n+1.$

The pair $(\eta, \xi)$ is called a $n$-\Lie-isoclinism from $(g_1)$ to $(g_2)$, and will be denoted by $(\eta, \xi) : (g_1) \to (g_2)$.
\end{De}

\begin{De} \label{iso ext}
Let $\Lieg_1$ and $\Lieg_2$ be Leibniz algebras. We say that $\Lieg_1$ and $\Lieg_2$ are  $n$-$\Lie$-isoclinic if $(g_1)$ and $(g_2)$ are $n$-$\Lie$-isoclinic $n$-$\Lie$-central extensions. We denote it  by  $(\eta, \xi): {\Lieg}_1 \underset{n}\sim {\Lieg}_2$.
\end{De}

\begin{Rem}
Clearly,  $0$-\Lie-isoclinism coincides with isomorphism and $1$-\Lie-isoclinism coincides with  the notion of {\Lie}-isoclinism  given in \cite{BC}.
\end{Rem}

\begin{Pro} \label{equivalence relation}
$n$-$\Lie$-isoclinism is an equivalence relation.
\end{Pro}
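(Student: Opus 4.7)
The plan is to verify the three axioms of an equivalence relation (reflexivity, symmetry, transitivity) directly from Definition~\ref{isoclinic}. In each case, the candidate pair of isomorphisms $(\eta,\xi)$ will be obtained as the identity, the inverse, or the composition of given pairs, and the required commutativity of diagram~\eqref{square isoclinic} will follow formally from the commutativity of the ``input'' diagrams.

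For \textbf{reflexivity} of any Leibniz algebra $\Lieg$, I would take $\eta = \mathrm{id}_{\Lieg/\ze_n^{\Lie}(\Lieg)}$ and $\xi = \mathrm{id}_{\gamma_{n+1}^{\Lie}(\Lieg)}$; diagram \eqref{square isoclinic} trivially commutes. For \textbf{symmetry}, given $(\eta,\xi):(g_1)\to(g_2)$, both $\eta$ and $\xi$ are isomorphisms by definition, so $\eta^{-1}$ and $\xi^{-1}$ exist; I would set $(\eta^{-1},\xi^{-1}):(g_2)\to(g_1)$ and verify commutativity by applying $\xi^{-1}$ on the left and $\eta^{-1}$ on the right of the original commuting square (equivalently, observe that an isomorphism of squares has an inverse square). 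For \textbf{transitivity}, given $(\eta_1,\xi_1):(g_1)\to(g_2)$ and $(\eta_2,\xi_2):(g_2)\to(g_3)$, I would take $(\eta_2\circ\eta_1,\xi_2\circ\xi_1):(g_1)\to(g_3)$; compositions of isomorphisms are isomorphisms, and pasting the two commuting squares side by side yields commutativity of the outer square.

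The only point that requires a small sanity check (rather than purely formal diagram chasing) is that, in each case, the constructed pair is compatible with the explicit formula for $\xi$ given in Definition~\ref{isoclinic}, namely
\[
\xi\bigl([[[x_1,x_2]_{lie},x_3]_{lie},\ldots,x_{n+1}]_{lie}\bigr)=[[[y_1,y_2]_{lie},y_3]_{lie},\ldots,y_{n+1}]_{lie},
\]
with $y_i\in\eta(\bar{x}_i)$. This well-definedness and independence of the choice of representatives is already built into the hypothesis that $(\eta,\xi)$ is an $n$-$\Lie$-isoclinism (it is precisely what commutativity of \eqref{square isoclinic} encodes), so in the symmetric and transitive cases this transfers automatically to the inverse and the composite.

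The proof should therefore be short and essentially formal; there is no real obstacle. The only subtlety worth flagging is ensuring that $\xi^{-1}$ (in the symmetry step) sends the commutator word built from the $y_i$'s back to the commutator word built from any lift of $\eta^{-1}(\tilde y_i)$, which is immediate since $\eta$ is a bijection between the respective quotients and $\xi$ is a bijection between the respective $(n+1)$-st $\Lie$-commutator ideals.
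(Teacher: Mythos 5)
Your proposal is correct and follows the standard argument (identity pair for reflexivity, inverse pair for symmetry, composite pair for transitivity, with commutativity of diagram \eqref{square isoclinic} transferred formally in each case); the paper itself simply states ``The proof is straightforward,'' so you are supplying exactly the details the authors omit. No discrepancy in approach.
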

\begin{proof}
The proof is straightforward
\end{proof}

\begin{Le}\label{Lem1}
Let $\Lieg$ be a Leibniz algebra, $\LieI$ an ideal of $\Lieg,$ and $\Lieh$ a subalgebra of $\Lieg.$ Then
\begin{enumerate}
\item[a)] $\Lieg$ is $n$-\Lie-isoclinic to $\Lieg\oplus\Lien,$ for all {\Lie}-nilpotent Leibniz algebra $\Lien$ of class  $n.$
\item[b)]  $\Lieh$  is $n$-\Lie-isoclinic to $\Lieh+{{\ze}_{n}^{\Lie}({\Lieg})}$ for $n \geq 1$ provided that  $[{\ze}_{n-1}^{\Lie}({\Lieh}), {\Lieh}]_{\Lie} \subseteq \gamma_{n+1}^{\Lie}(\Lieh)$.  Consequently, under the above requirements for $n$ and {\Lieh}, if $\Lieg=\Lieh+{{\ze}_{n}^{\Lie}({\Lieg})},$ then $\Lieg$ is $n$-\Lie-isoclinic to $\Lieh.$
\item[c)] If $\Lieg$ is $n$-\Lie-isoclinic to $\Lieh$  such that  $[{\ze}_{n-1}^{\Lie}({\Lieh}), {\Lieh}]_{\Lie} \subseteq \gamma_{n+1}^{\Lie}(\Lieh)$ and $\Lieg/{{\ze}_{n}^{\Lie}({\Lieg})} $ is finite dimensional, then $\Lieg=\Lieh+{{\ze}_{n}^{\Lie}({\Lieg})}.$
\item[d)] $\frac{\Lieg}{\LieI}$ is $n$-\Lie-isoclinic to $\frac{\Lieg}{\LieI\cap \gamma_{n+1}^{\Lie}(\Lieg)}.$ Consequently, if $\LieI\cap \gamma_{n+1}^{\Lie}(\Lieg)=0,$ then $\Lieg$ is $n$-\Lie-isoclinic to $ \frac{\Lieg}{\LieI}.$
\item[e)]  If  $\gamma_{n+1}^{\Lie}(\Lieg)$ is a finite dimensional subalgebra of $\Lieg$ and $\Lieg$ is $n$-\Lie-isoclinic to $ \frac{\Lieg}{\LieI},$ then $\LieI\cap \gamma_{n+1}^{\Lie}(\Lieg)=0.$
\end{enumerate}
\end{Le}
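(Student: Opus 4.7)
The plan is to handle each of the five parts in turn, in each case exhibiting candidates for the pair $(\eta,\xi)$ of Definition \ref{isoclinic} from an obvious structural map (inclusion, projection or direct-sum decomposition) and then verifying they are isomorphisms making the square commute. Since commutativity of that square forces $\xi$ on the generating iterated brackets once $\eta$ is fixed, the work reduces to checking that the chosen map descends to isomorphisms both on the quotient by $\ze_n^{\Lie}$ and on $\gamma_{n+1}^{\Lie}$.

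For (a), the \Lie-nilpotency hypothesis gives $\gamma_{n+1}^{\Lie}(\Lien)=0$ and $\ze_n^{\Lie}(\Lien)=\Lien$, so componentwise brackets yield $\ze_n^{\Lie}(\Lieg\oplus\Lien)=\ze_n^{\Lie}(\Lieg)\oplus\Lien$ and $\gamma_{n+1}^{\Lie}(\Lieg\oplus\Lien)=\gamma_{n+1}^{\Lie}(\Lieg)$; the canonical projection and identity supply $(\eta,\xi)$. For (b), take $\eta,\xi$ induced by $\Lieh\hookrightarrow\Lieh+\ze_n^{\Lie}(\Lieg)$. The core claim is $\gamma_{n+1}^{\Lie}(\Lieh+\ze_n^{\Lie}(\Lieg))=\gamma_{n+1}^{\Lie}(\Lieh)$: expanding an $(n+1)$-fold generator with each entry written $h_i+z_i$, the all-$h$ summand already lies in $\gamma_{n+1}^{\Lie}(\Lieh)$; for the remaining summands one uses the descent $[\ze_k^{\Lie}(\Lieg),\Lieg]_{\Lie}\subseteq\ze_{k-1}^{\Lie}(\Lieg)$ together with the Leibniz identity to shift every $z\in\ze_n^{\Lie}(\Lieg)$-entry to an inner position of the nesting, so that only residual terms of type $[\ze_{n-1}^{\Lie}(\Lieh),\Lieh]_{\Lie}$ survive, and these are absorbed into $\gamma_{n+1}^{\Lie}(\Lieh)$ by the hypothesis. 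The identification on $\Lieg/\ze_n^{\Lie}$ is handled similarly, noting that $\ze_n^{\Lie}(\Lieg)\subseteq\ze_n^{\Lie}(\Lieh+\ze_n^{\Lie}(\Lieg))$ holds immediately from the characterisation of the $n$-th \Lie-centre (any $n$-fold bracket vanishing over all of $\Lieg$ a fortiori vanishes over the subalgebra). The consequence is immediate since $\Lieh+\ze_n^{\Lie}(\Lieg)=\Lieg$.

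For (c), apply (b) to obtain $\Lieh\underset{n}\sim\Lieh+\ze_n^{\Lie}(\Lieg)$ and combine with the hypothesis $\Lieg\underset{n}\sim\Lieh$ via transitivity (Proposition \ref{equivalence relation}) to deduce $\Lieg\underset{n}\sim\Lieh+\ze_n^{\Lie}(\Lieg)$. The inclusion $\Lieh+\ze_n^{\Lie}(\Lieg)\hookrightarrow\Lieg$ gives an injection $(\Lieh+\ze_n^{\Lie}(\Lieg))/\ze_n^{\Lie}(\Lieg)\hookrightarrow\Lieg/\ze_n^{\Lie}(\Lieg)$, while the containment $\ze_n^{\Lie}(\Lieg)\subseteq\ze_n^{\Lie}(\Lieh+\ze_n^{\Lie}(\Lieg))$ gives a surjection of the same source onto a quotient identified, via the isoclinism, with $\Lieg/\ze_n^{\Lie}(\Lieg)$. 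All three finite-dimensional spaces share the same dimension, forcing the injection to be surjective and hence $\Lieg=\Lieh+\ze_n^{\Lie}(\Lieg)$. For (d), set $J:=\LieI\cap\gamma_{n+1}^{\Lie}(\Lieg)$ and take $\eta,\xi$ induced by $\pi:\Lieg/J\twoheadrightarrow\Lieg/\LieI$. Each generator of $\gamma_{n+1}^{\Lie}(\Lieg,\LieI)$ lies in $\LieI\cap\gamma_{n+1}^{\Lie}(\Lieg)=J$, so $\LieI/J\subseteq\ze_n^{\Lie}(\Lieg/J)$ and $\pi$ is $n$-\Lie-central; surjectivity of $\eta,\xi$ is automatic, and injectivity follows because any iterated bracket of length $n+1$ in $\Lieg$ that lies in $\LieI$ automatically lies in $J$. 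Finally, for (e), the canonical projection $p:\Lieg\twoheadrightarrow\Lieg/\LieI$ restricts to a surjection $\gamma_{n+1}^{\Lie}(\Lieg)\twoheadrightarrow\gamma_{n+1}^{\Lie}(\Lieg/\LieI)$ with kernel $\LieI\cap\gamma_{n+1}^{\Lie}(\Lieg)$; the isoclinism forces these two spaces to have the same finite dimension, so the surjection is bijective and its kernel vanishes.

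The main obstacle is (b): the combinatorial bookkeeping of rewriting an $(n+1)$-fold iterated $[-,-]_{lie}$-bracket with entries from $\Lieh+\ze_n^{\Lie}(\Lieg)$ by repeatedly applying the upper \Lie-central descent and the Leibniz identity, and recognising the surviving terms as belonging to $[\ze_{n-1}^{\Lie}(\Lieh),\Lieh]_{\Lie}$ where the hypothesis applies. Once (b) is in hand, items (a) and (d) reduce to formal product/quotient manipulations, and items (c) and (e) are dimension counts built on (b) and the isoclinism.
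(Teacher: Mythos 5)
Your proposal is correct and follows essentially the same route as the paper: the same structural maps (direct-sum splitting for (a), inclusion into $\Lieh+{\ze}_{n}^{\Lie}(\Lieg)$ with the key computation $\gamma_{n+1}^{\Lie}(\Lieh+{\ze}_{n}^{\Lie}(\Lieg))=\gamma_{n+1}^{\Lie}(\Lieh)$ for (b), the projection $\Lieg/(\LieI\cap\gamma_{n+1}^{\Lie}(\Lieg))\twoheadrightarrow\Lieg/\LieI$ for (d)), and the same transitivity-plus-dimension-count arguments for (c) and (e). The only cosmetic difference is that in (e) you apply the first isomorphism theorem to the restricted projection directly rather than routing through (d), and your expansion in (b) is the multilinear version of the paper's induction $\gamma_{k}^{\Lie}(\Lieh+{\ze}_{n}^{\Lie}(\Lieg))\subseteq\gamma_{k}^{\Lie}(\Lieh)+{\ze}_{n-1}^{\Lie}(\Lieg)$; neither changes the substance.
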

\begin{proof}
    To prove {\it a)}, let $\Lien$ be a {\Lie}-nilpotent Leibniz algebra of class $n.$ Then $\gamma_{n+1}^{\Lie}(\Lien)=0$ and ${{\ze}_{n}^{\Lie}(\Lien)}=\Lien.$ So ${{\ze}_{n}^{\Lie}(\Lieg\oplus\Lien)}={{\ze}_{n}^{\Lie}(\Lieg)}\oplus{{\ze}_{n}^{\Lie}(\Lien)}= {{\ze}_{n}^{\Lie}(\Lieg)} \oplus {\Lien},$ and $\gamma_{n+1}^{\Lie}(\Lieg\oplus\Lien)=\gamma_{n+1}^{\Lie}(\Lieg)\oplus\gamma_{n+1}^{\Lie}(\Lien)=\gamma_{n+1}^{\Lie}(\Lieg).$ Consider the maps $\eta:\frac{\Lieg}{{{\ze}_{n}^{\Lie}({\Lieg})}} \to \frac{\Lieg\oplus\Lien}{{{\ze}_{n}^{\Lie}({\Lieg})}\oplus\Lien}$ and $\xi=id_{\gamma_{n+1}^{\Lie}(\Lieg)},$ where $\eta(g+{{\ze}_{n}^{\Lie}(\Lieg)})=g+({{\ze}_{n}^{\Lie}(\Lieg)\oplus\Lien}).$ It is easy to check that $\eta$ and $\xi$ are isomorphisms and the corresponding diagram (\ref{square isoclinic}) commutes.
\medskip

To prove {\it b)}, set $\Lieh_1=\Lieh+{{\ze}_{n}^{\Lie}({\Lieg})}$ and consider the mapping $\eta:\Lieh/{{\ze}_{n}^{\Lie}({\Lieh})} \to \Lieh_1/{{\ze}_{n}^{\Lie}({\Lieh_1})}$ defined by $\eta(h+{{\ze}_{n}^{\Lie}(\Lieh)})=h+{{\ze}_{n}^{\Lie}(\Lieh_1)}.$
%$\eta(\bar{h})=\tilde{h}$ where $\bar{h}=h+{{\ze}_{n}^{\Lie}(\Lieh)}$ and $\tilde{h}=h+{{\ze}_{n}^{\Lie}(\Lieh_1)}.$
$\eta$ is clearly a surjective homomorphism. To show that $\eta$ is one-to-one, it is enough to show that  $\Lieh\cap{{\ze}_{n}^{\Lie}({\Lieh_1})}\subseteq{{\ze}_{n}^{\Lie}({\Lieh})}.$  We proceed by induction. Clearly, for $h\in \Lieh\cap{{\ze}_{1}^{\Lie}({\Lieh_1})},$ $[h,x]_{lie}=0$ for all $x\in\Lieh$ since $\Lieh\subseteq\Lieh_1,$ so $\Lieh\cap{{\ze}_{1}^{\Lie}({\Lieh_1})}\subseteq{{\ze}_{1}^{\Lie}({\Lieh})}.$ Now let $h\in\Lieh\cap{{\ze}_{n}^{\Lie}({\Lieh_1})},$ then for all $z\in\Lieh\subseteq\Lieh_1,$ $[h,z]_{lie}\in \Lieh\cap{{\ze}_{n-1}^{\Lie}({\Lieh_1})}\subseteq{{\ze}_{n-1}^{\Lie}({\Lieh})},$  implying that $h\in {{\ze}_{n}^{\Lie}({\Lieh})}.$ So $\Lieh\cap{{\ze}_{n}^{\Lie}({\Lieh_1})}\subseteq{{\ze}_{n}^{\Lie}({\Lieh})}.$ Therefore $\eta$ is an isomorphism.

 Then we claim  that $(\eta, id): (h_1)\underset{n}\sim (h)$ is a $n$-\Lie-isoclinism, with  the identity mapping $id: \gamma_{n}^{\Lie}(\Lieh)\to \gamma_{n}^{\Lie}(\Lieh_1).$
 Indeed, a standard induction shows that $\gamma_{k}^{\Lie}(\Lieh_1)\subseteq \gamma_{k}^{\Lie}(\Lieh)+{{\ze}_{n-1}^{\Lie}({\Lieg})}$  for all $2\leq k \leq n.$
We now have
$$
\begin{aligned}
\gamma_{n+1}^{\Lie}(\Lieh_1)&=[\gamma_{n}^{\Lie}(\Lieh_1),\Lieh_1]_{\Lie}\\&
=[\gamma_{n}^{\Lie}(\Lieh_1),\Lieh+{\ze}_{n}^{\Lie}({\Lieg})]_{\Lie} \\
&= [\gamma_{n}^{\Lie}(\Lieh_1),\Lieh]_{\Lie}~~\mbox{since}~~
[\gamma_{n}^{\Lie}(\Lieh_1), {{\ze}_{n}^{\Lie}({\Lieg})}]_{\Lie}=0 \\
&\subseteq\big[ \gamma_{n}^{\Lie}(\Lieh)+{{\ze}_{n-1}^{\Lie}({\Lieg})},~\Lieh\big]_{\Lie}\\
&\subseteq \big[ \gamma_{n}^{\Lie}(\Lieh),\Lieh\big]_{\Lie}+\big[{{\ze}_{n-1}^{\Lie}({\Lieg})},~\Lieh\big]_{\Lie}\\
&\subseteq \gamma_{n+1}^{\Lie}(\Lieh)+\big[{{\ze}_{n-1}^{\Lie}({\Lieh})},~\Lieh\big]_{\Lie}\\&\subseteq\gamma_{n+1}^{\Lie}(\Lieh).\end{aligned}
$$
The converse inclusion is obvious and the commutativity of diagram (\ref{square isoclinic}) is trivial.
\medskip

To prove {\it c)}, assume that $\Lieg\underset{n}\sim\Lieh$ and dim$\left( \frac{\Lieg}{{{\ze}_{n}^{\Lie}({\Lieg})}} \right)$ is finite. Again, set $\Lieh_1=\Lieh+{{\ze}_{n}^{\Lie}({\Lieg})}.$ Since by {\it b)}, $\Lieh\underset{n}\sim\Lieh_1,$ we have by Proposition \ref{equivalence relation} that $\Lieg\underset{n}\sim\Lieh_1$ which implies that $\frac{\Lieg}{{{\ze}_{n}^{\Lie}({\Lieg})}}\overset{\eta}\cong\frac{\Lieh_1}{{{\ze}_{n}^{\Lie}({\Lieh_1})}}.$ As ${{\ze}_{n}^{\Lie}({\Lieg})} \subseteq{{\ze}_{n}^{\Lie}({\Lieh_1})}, $ it follows that $${\rm dim} \left(\frac{\Lieg}{{{\ze}_{n}^{\Lie}({\Lieg})}} \right)={\rm dim} \left( \frac{\Lieh_1}{{{\ze}_{n}^{\Lie}({\Lieh_1})}} \right) \leq {\rm dim} \left( \frac{\Lieg}{{{\ze}_{n}^{\Lie}({\Lieh_1})}} \right) \leq {\rm dim} \left( \frac{\Lieg}{{{\ze}_{n}^{\Lie}({\Lieg})}} \right).$$ Since ${\rm dim}\left( \frac{\Lieg}{{{\ze}_{n}^{\Lie}({\Lieg})}} \right)$ is finite, it follows that $\Lieg \cong \Lieh_1 = \Lieh+{{\ze}_{n}^{\Lie}({\Lieg})}.$
\medskip

To prove {\it d)}, consider the map  $\eta: \Lieg_1/{{\ze}_{n}^{\Lie}({\Lieg_1})} \to\Lieg_2/{{\ze}_{n}^{\Lie}({\Lieg_2})}$ where $\Lieg_1=\frac{\Lieg}{\LieI}$ and $\Lieg_2=\frac{\Lieg}{\LieI\cap \gamma_{n+1}^{\Lie}(\Lieg)},$ defined by
$\eta(\bar{g}+{{\ze}_{n}^{\Lie}({\Lieg_1})} )=\tilde{g}+{{\ze}_{n}^{\Lie}({\Lieg_2})} $ with  $\bar{g}=g+\LieI$ and $\tilde{g}=g+\LieI\cap \gamma_{n+1}^{\Lie}(\Lieg).$
Then $\eta$ is clearly an isomorphism. Also the map $\xi: \gamma_{n+1}^{\Lie}(\Lieg_1)\to\gamma_{n+1}^{\Lie}(\Lieg_2)$ defined by $\xi\left(\left[ \left[ \left[\bar{g}_1, \bar{g}_2\right]_{lie}, \bar{g}_3 \right]_{lie}, \ldots,\bar{g}_{n+1} \right]_{lie} \right)=\left[ \left[ \left[\tilde{g}_1, \tilde{g}_2\right]_{lie}, \tilde{g}_3 \right]_{lie}, \ldots,\tilde{g}_{n+1} \right]_{lie},$ $g_1,\ldots,g_{n+1}\in\Lieg,$ is a well-defined isomorphism and the corresponding diagram (\ref{square isoclinic}) commutes. Therefore  $(\eta,\xi)$ is a $n$-\Lie-isoclinism.
\medskip

To prove {\it e)}, assume that dim$\left( \gamma_{n+1}^{\Lie}(\Lieg) \right)$ is finite and $\Lieg\underset{n}\sim\frac{\Lieg}{\LieI}.$ Then $\gamma_{n+1}^{\Lie}(\Lieg)\overset{\xi}\cong\gamma_{n+1}^{\Lie}(\frac{\Lieg}{\LieI}).$ Also, by {\it d)} we have $\frac{\Lieg}{\LieI}\underset{n}\sim\frac{\Lieg}{\LieI\cap \gamma_{n+1}^{\Lie}(\Lieg)}$ which yields $\gamma_{n+1}^{\Lie}(\frac{\Lieg}{\LieI})\overset{\xi'}\cong\gamma_{n+1}^{\Lie}(\frac{\Lieg}{\LieI\cap \gamma_{n+1}^{\Lie}(\Lieg)})\cong\frac{\gamma_{n+1}^{\Lie}(\Lieg)}{\LieI\cap \gamma_{n+1}^{\Lie}(\Lieg)}.$ Therefore $\gamma_{n+1}^{\Lie}(\Lieg)\cong\frac{\gamma_{n+1}^{\Lie}(\Lieg)}{\LieI\cap \gamma_{n+1}^{\Lie}(\Lieg)}.$ Hence $\LieI\cap \gamma_{n+1}^{\Lie}(\Lieg)=0$ as dim$ \left( \gamma_{n+1}^{\Lie}(\Lieg) \right)$ is finite.
\end{proof}

\begin{Ex} An example of the requirements in Lemma \ref{Lem1} {\it b)} is given by the  four-dimensional non-{\Lie}-nilpotent non-Lie Leibniz algebra
$\Lieh=span\{a_1,a_2,a_3,a_4\}$  with nonzero brackets $[a_1,a_1]=a_4, ~~[a_1,a_2]=-a_2,~~[a_2,a_1]=a_2,~~[a_3,a_1]=-a_3,~~[a_3,a_2]=a_4$ (algebra ${\cal L}_5$ in \cite[Proposition 3.9]{CK}).

Then clearly,  ${{\cal Z}}^{\Lie}_n(\Lieh)=span\{a_2,a_4\},~$ for all $n\geq 1.$ Also,   ${\gamma_n^{\Lie}(\Lieh)}=span\{a_3, a_4\}~$ for all $n\geq 2.$ One easily verifies that $\big[{{\ze}_{0}^{\Lie}({\Lieh})},~\Lieh\big]_{\Lie}=0\subseteq{\gamma_2^{\Lie}(\Lieh)},~~$ $\big[{{\ze}_{1}^{\Lie}({\Lieh})},~\Lieh\big]_{\Lie}=0\subseteq{\gamma_3^{\Lie}(\Lieh)}~~$ and $~\big[{{\ze}_{n-1}^{\Lie}({\Lieh})},~\Lieh\big]_{\Lie}=span\{a_4\}\subseteq{\gamma_{n+1}^{\Lie}(\Lieh)}$ for all $n>2.$
\end{Ex}

\begin{Rem}
Lemma \ref{Lem1} {\it b)} in case $n=1$ provides the isoclinism given in Proposition 3.20 {\it c)} in \cite{BC}.
\end{Rem}

\begin{Co}
Let $\Lieg$ be a Leibniz algebra. Then $\Lieg$ is $n$-\Lie-isoclinic to some Leibniz algebra $\Lieh$ satisfying ${Z}_{\Lie}(\Lieh)\cap \gamma_n^{\Lie}(\Lieh)\subseteq \gamma_{n+1}^{\Lie}(\Lieh). $
\end{Co}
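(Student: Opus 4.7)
By Lemma~\ref{Lem1}(d), it suffices to find a two-sided ideal $\LieI$ of $\Lieg$ with $\LieI \cap \gamma_{n+1}^{\Lie}(\Lieg) = 0$ such that the quotient $\Lieh := \Lieg/\LieI$ satisfies the displayed inclusion. The plan is to take $\LieI$ maximal with this property. Its existence is guaranteed by Zorn's Lemma, since the family of ideals $\LieK \trianglelefteq \Lieg$ satisfying $\LieK \cap \gamma_{n+1}^{\Lie}(\Lieg) = 0$ is non-empty (contains $0$) and closed under unions of ascending chains. Setting $\Lieh := \Lieg/\LieI$, Lemma~\ref{Lem1}(d) yields $\Lieg \underset{n}\sim \Lieh$, reducing the task to verifying $Z_{\Lie}(\Lieh) \cap \gamma_n^{\Lie}(\Lieh) \subseteq \gamma_{n+1}^{\Lie}(\Lieh)$.

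Let $\pi \colon \Lieg \twoheadrightarrow \Lieh$ denote the projection, and pick $\bar x \in Z_{\Lie}(\Lieh) \cap \gamma_n^{\Lie}(\Lieh)$. Since $\pi(\gamma_n^{\Lie}(\Lieg)) = \gamma_n^{\Lie}(\Lieh)$, we may lift $\bar x$ to some $x \in \gamma_n^{\Lie}(\Lieg)$. The centrality $\bar x \in Z_{\Lie}(\Lieh)$ forces $[x,y]_{lie} \in \LieI$ for every $y \in \Lieg$, while $x \in \gamma_n^{\Lie}(\Lieg)$ forces $[x,y]_{lie} \in \gamma_{n+1}^{\Lie}(\Lieg)$; hence $[x,y]_{lie} \in \LieI \cap \gamma_{n+1}^{\Lie}(\Lieg) = 0$, and therefore $x \in Z_{\Lie}(\Lieg) \cap \gamma_n^{\Lie}(\Lieg)$.

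Assume, towards a contradiction, that $\bar x \notin \gamma_{n+1}^{\Lie}(\Lieh)$. A short induction on bracket depth---using the Leibniz identity together with the $\Lie$-centrality of $x$---shows that every iterated Leibniz bracket of $x$ with elements of $\Lieg$ remains $\Lie$-central, so the two-sided ideal $\langle x \rangle_{\Lieg}$ generated by $x$ in $\Lieg$ is contained in $Z_{\Lie}(\Lieg) \cap \gamma_n^{\Lie}(\Lieg)$. Set $\mathfrak{J} := \LieI + \langle x \rangle_{\Lieg}$; this is an ideal of $\Lieg$ strictly containing $\LieI$ (since otherwise $x \in \LieI$ and hence $\bar x = 0$, contradicting our assumption). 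Maximality of $\LieI$ then forces $\mathfrak{J} \cap \gamma_{n+1}^{\Lie}(\Lieg) \neq 0$, and a careful analysis of a nonzero element of this intersection, projected via $\pi$, produces the contradiction $\bar x \in \gamma_{n+1}^{\Lie}(\Lieh)$.

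\emph{Main obstacle.} The delicate step is the last one: extracting the specific conclusion $\bar x \in \gamma_{n+1}^{\Lie}(\Lieh)$ from the bare non-triviality $\mathfrak{J} \cap \gamma_{n+1}^{\Lie}(\Lieg) \neq 0$. In the classical Lie case, $\mathbb{K} x$ is itself an ideal, so $\mathfrak{J} = \LieI \oplus \mathbb{K} x$ and any nonzero element of the intersection has the form $i + \lambda x$ with $\lambda \neq 0$, immediately giving $x \in \gamma_{n+1}^{\Lie}(\Lieg) + \LieI$. In the Leibniz setting $\mathbb{K} x$ need not be stable under the Leibniz bracket, so $\langle x \rangle_{\Lieg}$ can be strictly larger than $\mathbb{K} x$, and one must use the inclusion $\langle x \rangle_{\Lieg} \subseteq Z_{\Lie}(\Lieg) \cap \gamma_n^{\Lie}(\Lieg)$ together with a decomposition modulo $\LieI + \gamma_{n+1}^{\Lie}(\Lieg)$ to isolate the coefficient of $\bar x$ in the witnessing combination.
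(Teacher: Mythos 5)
Your overall reduction --- pass to a quotient $\Lieh=\Lieg/\LieI$ with $\LieI\cap\gamma_{n+1}^{\Lie}(\Lieg)=0$ and invoke Lemma~\ref{Lem1}~d) --- is the same as the paper's, and your second paragraph (lifting $\bar x$ to $x\in\gamma_n^{\Lie}(\Lieg)$ and deducing $[x,y]_{lie}\in\LieI\cap\gamma_{n+1}^{\Lie}(\Lieg)=0$, hence $x\in Z_{\Lie}(\Lieg)\cap\gamma_n^{\Lie}(\Lieg)$) reproduces the corresponding step of the paper's argument correctly. But the proof is not complete: the entire burden of the statement sits in the final step, which you leave open. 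Maximality of $\LieI$ only yields that $\bigl(\LieI+\langle x\rangle_{\Lieg}\bigr)\cap\gamma_{n+1}^{\Lie}(\Lieg)\neq 0$, i.e.\ that \emph{some} $u\in\langle x\rangle_{\Lieg}$ lies in $\LieI+\gamma_{n+1}^{\Lie}(\Lieg)$. Writing $u=\lambda x+v$ with $v$ a combination of iterated brackets of $x$ with elements of $\Lieg$, you would need both $\lambda\neq 0$ and $v\in\LieI+\gamma_{n+1}^{\Lie}(\Lieg)$, and neither is forced: $x\in Z_{\Lie}(\Lieg)$ only gives $[x,g]_{lie}=0$, not $[x,g]=0$, so $\langle x\rangle_{\Lieg}$ genuinely contains elements $[x,g]$ that need not lie in $\gamma_{n+1}^{\Lie}(\Lieg)$ (which is generated by the \emph{symmetrised} brackets $[m,g]_{lie}$) nor be proportional to $x$ modulo $\LieI+\gamma_{n+1}^{\Lie}(\Lieg)$. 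The witness $u$ may therefore have $\lambda=0$, in which case it says nothing about $x$. This is exactly the obstacle you flag yourself; as the argument stands it is a gap, not a detail to be routinely filled.

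The paper closes this by choosing the ideal to kill differently: it takes $\Liet_n$ to be a complement of $Z_{\Lie}(\Lieg)\cap\gamma_{n+1}^{\Lie}(\Lieg)$ inside $Z_{\Lie}(\Lieg)\cap\gamma_{n}^{\Lie}(\Lieg)$ (this is what its notation $(\gamma_{n}^{\Lie}(\Lieg)\setminus\gamma_{n+1}^{\Lie}(\Lieg))\cap Z_{\Lie}(\Lieg)$ is meant to encode), so that by construction $\Liet_n\cap\gamma_{n+1}^{\Lie}(\Lieg)=0$ \emph{and} $Z_{\Lie}(\Lieg)\cap\gamma_n^{\Lie}(\Lieg)=\bigl(Z_{\Lie}(\Lieg)\cap\gamma_{n+1}^{\Lie}(\Lieg)\bigr)+\Liet_n$. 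After your (correct) step showing $x\in Z_{\Lie}(\Lieg)\cap\gamma_n^{\Lie}(\Lieg)$, this second identity immediately gives $x\in\gamma_{n+1}^{\Lie}(\Lieg)+\Liet_n$, i.e.\ $\bar x\in\gamma_{n+1}^{\Lie}(\Lieh)$, with no maximality or contradiction argument needed. If you wish to keep your route, you must either replace $\langle x\rangle_{\Lieg}$ by such a complement, or first establish the stronger fact (Corollary~\ref{existence} of the paper) that the maximal quotient is an $n$-$\Lie$-stem algebra and then use $Z_{\Lie}(\Lieh)\cap\gamma_n^{\Lie}(\Lieh)\subseteq{\ze}_n^{\Lie}(\Lieh)\subseteq\gamma_{n+1}^{\Lie}(\Lieh)$; some such additional input is indispensable.
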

\begin{proof}
Let $\gamma=( \gamma_{n}^{\Lie}(\Lieg)\setminus \gamma_{n+1}^{\Lie}(\Lieg)).$  Then   $\Liet_n:=\gamma\cap {Z}_{\Lie}(\Lieg )$ is a two-sided ideal of $\Lieg$ satisfying $\Liet_n\cap \gamma_{n+1}^{\Lie}(\Lieg)=0.$
Therefore ${\Lieg}/{\Liet_n}$ is $n$-\Lie-isoclinic to $\Lieg$ by Lemma \ref{Lem1} {\it d)}.  Take $\Lieh:=\Lieg/\Liet_n.$ It remains to show that ${Z}_{\Lie}(\Lieh)\cap \gamma_n^{\Lie}(\Lieh)\subseteq \gamma_{n+1}^{\Lie}(\Lieh).$ Indeed, let $x+\Liet_n\in {Z}_{\Lie}(\Lieh)\cap \gamma_n^{\Lie}(\Lieh)=\frac{{Z}_{\Lie}(\Lieg)}{\Liet_n}\cap\frac{\gamma_n^{\Lie}(\Lieg)}{\Liet_n}.$ Without loss of generality, assume that $x\in\gamma_n^{\Lie}(\Lieg).$ Then $y:=[x,g]_{lie}\in\Liet_n$ for all $g\in\Lieg,$ and $y\in \gamma_{n+1}^{\Lie}(\Lieg).$  So $y=0$ since $\Liet_n\cap \gamma_{n+1}^{\Lie}(\Lieg)=0.$ This implies that $x\in{Z}_{\Lie}(\Lieg), $ and so $x\in \gamma_n^{\Lie}(\Lieg)\cap {Z}_{\Lie}(\Lieg)=\gamma_{n+1}^{\Lie}(\Lieg)\cap {Z}_{\Lie}(\Lieg)+\Liet_n,$ and thus $x+\Liet_n\in \gamma_{n+1}^{\Lie}(\Lieh)$ since $\gamma_{n+1}^{\Lie}(\Lieh)=\frac{\gamma_{n+1}^{\Lie}(\Lieg)}{\Liet_n}.$
\end{proof}

\begin{Pro} \label{backward}
Let $(\eta, \xi): (g_1) \underset{n}\sim (g_2)$ be a $n$-$\Lie$-isoclinism and consider the following set  $$\Lieg_2^{\eta} = \left\{ (g,x+{{\ze}_{n}^{\Lie}({\Lieg_1})}) \in  \Lieg_2 \times  \frac{\Lieg_1}{{{\ze}_{n}^{\Lie}({\Lieg_1})}} ~\mid ~g+{{\ze}_{n}^{\Lie}({\Lieg_2})} = \eta(x+{{\ze}_{n}^{\Lie}({\Lieg_1})}) \right\}.$$ There is a $n$-{\Lie}-central extension $(\eta^{\ast}(g_2)): 0 \to {{\ze}_{n}^{\Lie}({\Lieg_2})} \to \Lieg_2^{\eta} \stackrel{\overline{\pi}_2}\to \frac{\Lieg_1}{{{\ze}_{n}^{\Lie}({\Lieg_1})}}\to 0$ isomorphic to $(g_2)$, and $n$-\Lie-isoclinic to $(g_1).$
\end{Pro}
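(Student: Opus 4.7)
The plan is to realise $\Lieg_2^{\eta}$ as the pullback of $\pi_2:\Lieg_2\to\Lieg_2/{\ze}_n^{\Lie}({\Lieg_2})$ along $\eta$, taken inside the direct product Leibniz algebra $\Lieg_2\times(\Lieg_1/{\ze}_n^{\Lie}({\Lieg_1}))$ equipped with the componentwise bracket. First I would verify closure of $\Lieg_2^{\eta}$ under this bracket: for $(g,\bar{x}),(g',\bar{x}')\in\Lieg_2^{\eta}$, one has $[g,g']+{\ze}_n^{\Lie}({\Lieg_2})=[\eta(\bar{x}),\eta(\bar{x}')]=\eta([\bar{x},\bar{x}'])$, so $([g,g'],[\bar{x},\bar{x}'])\in\Lieg_2^{\eta}$, confirming that $\Lieg_2^{\eta}$ inherits a Leibniz algebra structure.

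Next, define $\iota:{\ze}_n^{\Lie}({\Lieg_2})\to\Lieg_2^{\eta}$ by $\iota(z)=(z,0)$, which is well defined since $\eta(0)=0$. Surjectivity of $\overline{\pi}_2$ follows from that of $\eta$, and the equality $\ker(\overline{\pi}_2)=\iota({\ze}_n^{\Lie}({\Lieg_2}))$ is immediate from the defining condition of $\Lieg_2^{\eta}$, giving exactness of $(\eta^{\ast}(g_2))$. To establish $n$-\Lie-centrality I would prove by induction on $k$ that $\iota({\ze}_k^{\Lie}({\Lieg_2}))\subseteq{\ze}_k^{\Lie}({\Lieg_2^{\eta}})$: for $(z,0)\in\iota({\ze}_k^{\Lie}({\Lieg_2}))$ and $(g,\bar{x})\in\Lieg_2^{\eta}$, the componentwise bracket is $([z,g]_{lie},0)$ with $[z,g]_{lie}\in{\ze}_{k-1}^{\Lie}({\Lieg_2})$, so by the inductive hypothesis this bracket lies in ${\ze}_{k-1}^{\Lie}({\Lieg_2^{\eta}})$.

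For the isomorphism with $(g_2)$, consider the first-coordinate projection $\phi:\Lieg_2^{\eta}\to\Lieg_2$, $\phi(g,\bar{x})=g$. It is surjective since any $g\in\Lieg_2$ pairs with $\bar{x}:=\eta^{-1}(g+{\ze}_n^{\Lie}({\Lieg_2}))$ to give an element of $\Lieg_2^{\eta}$, and injective because $\phi(g,\bar{x})=0$ forces $\eta(\bar{x})=0$, hence $\bar{x}=0$. Together with the identity on ${\ze}_n^{\Lie}({\Lieg_2})$ and with $\eta$ on the quotients, $\phi$ assembles into a commutative ladder from $(\eta^{\ast}(g_2))$ to $(g_2)$, exhibiting an isomorphism of $n$-\Lie-central extensions; in particular $\phi$ identifies ${\ze}_n^{\Lie}({\Lieg_2^{\eta}})$ with $\iota({\ze}_n^{\Lie}({\Lieg_2}))$ and $\gamma_{n+1}^{\Lie}(\Lieg_2^{\eta})$ with $\gamma_{n+1}^{\Lie}(\Lieg_2)$.

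Finally, $n$-\Lie-isoclinism of $(\eta^{\ast}(g_2))$ with $(g_1)$ follows by transitivity: an isomorphism of extensions is in particular an $n$-\Lie-isoclinism, so $(\eta^{\ast}(g_2))\underset{n}\sim(g_2)$, and combining with the hypothesis $(g_2)\underset{n}\sim(g_1)$ via Proposition~\ref{equivalence relation} gives the desired conclusion; explicitly, the pair $(\eta^{-1}\circ\overline{\phi},\ \xi^{-1}\circ\phi|_{\gamma_{n+1}^{\Lie}(\Lieg_2^{\eta})})$, where $\overline{\phi}$ is the isomorphism induced by $\phi$ on the quotients modulo ${\ze}_n^{\Lie}$, realises the isoclinism. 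The principal technical point I expect to require care is the inductive verification that $\iota({\ze}_n^{\Lie}({\Lieg_2}))\subseteq{\ze}_n^{\Lie}({\Lieg_2^{\eta}})$; once the pullback structure is in place, everything else is routine diagram-chasing.
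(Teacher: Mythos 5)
Your argument is correct and it actually establishes more of the statement than the paper's own proof writes down, but it reaches the isoclinism with $(g_1)$ by a different route. The paper exhibits the isoclinism directly as the pair $\bigl(\mathrm{id}_{\Lieg_1/{\ze}_{n}^{\Lie}({\Lieg_1})},\,\xi'\bigr)$, where $\xi'$ is defined on the generators of $\gamma_{n+1}^{\Lie}(\Lieg_1)$ by the explicit formula $\xi'\bigl(C_1^{n+1}(\bar{x}_1,\ldots,\bar{x}_{n+1})\bigr)=\bigl(\xi(C_1^{n+1}(\bar{x}_1,\ldots,\bar{x}_{n+1})),\,[[[\bar{x}_1,\bar{x}_2]_{lie},\ldots]_{lie},\bar{x}_{n+1}]_{lie}\bigr)$, and leaves everything else (closure of $\Lieg_2^{\eta}$ under the bracket, exactness, $n$-\Lie-centrality, the isomorphism with $(g_2)$) implicit. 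You instead recognize $\Lieg_2^{\eta}$ as the pullback of $\pi_2$ along the isomorphism $\eta$, deduce that the first projection $\phi$ is an isomorphism of extensions onto $(g_2)$, and get the isoclinism with $(g_1)$ by transitivity. The two constructions produce the same maps: on generators the paper's $\xi'$ is exactly $\phi^{-1}\circ\xi$, so your $\xi^{-1}\circ\phi|_{\gamma_{n+1}^{\Lie}(\Lieg_2^{\eta})}$ is its inverse, and $\eta^{-1}\circ\overline{\phi}$ is the identity under the canonical identification $\Lieg_2^{\eta}/{\ze}_{n}^{\Lie}(\Lieg_2^{\eta})\cong\Lieg_1/{\ze}_{n}^{\Lie}(\Lieg_1)$ induced by $\overline{\pi}_2$ --- a point worth making explicit, since Proposition \ref{xieta} later relies on the isoclinism between $(g_1)$ and $(\eta^{\ast}(g_2))$ having identity first component. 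What your route buys is that the well-definedness of $\xi'$, which the paper merely asserts for a map given on generators, comes for free because $\phi^{-1}\circ\xi$ is globally defined, and that the isomorphism with $(g_2)$ --- a claim of the statement the paper's proof never addresses --- is proved rather than assumed; what the paper's direct formula buys is that the identity-on-the-quotient feature is visible at a glance instead of being extracted from a composite.
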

\begin{proof}
Choose $\eta': id_{\Lieg_1/{{\ze}_{n}^{\Lie}({\Lieg_1})}}$ the identity map, and define
$\xi':\gamma_{n+1}^{\Lie}(\Lieg_1)\to \gamma_{n+1}^{\Lie}(\Lieg_2^{\eta})$ by $\xi'(C_1^{n+1}(\bar{x}_1,\ldots,\bar{x}_{n+1}))=C_2^{\eta}(\bar{x}_1,\ldots,\bar{x}_{n+1})$ where $$C_2^{\eta}(\bar{x}_1,\ldots,\bar{x}_{n+1})=(\xi(C_1^{n+1}(\bar{x}_1,\ldots,\bar{x}_{n+1})),[[[\bar{x}_1,\bar{x}_2]_{lie},\ldots, \bar{x}_n]_{lie},\bar{x}_{n+1}]_{lie}).$$ Then $\xi'$ is clearly an isomorphism since $\xi:\gamma_{n+1}^{\Lie}(\Lieg_1)\to \gamma_{n+1}^{\Lie}(\Lieg_2)$ is an isomorphism, and the diagram
\begin{equation}
 \xymatrix{
\frac{\Lieg_1}{{\ze}_{n}^{\Lie}({\Lieg_1})}  \times\ldots\times \frac{\Lieg_1}{{\ze}_{n}^{\Lie}({\Lieg_1})}   \ar[r]^-*+{C_1^{n+1}} \ar[d]_{\eta^{ n+1}}  & \gamma_{n+1}^{\Lie}(\Lieg_1)  \ar[d]^{\xi'_n}\\
\frac{\Lieg_1}{{\ze}_{n}^{\Lie}({\Lieg_1})}  \times\ldots\times \frac{\Lieg_1}{{\ze}_{n}^{\Lie}({\Lieg_1})}    \ar[r]^-*+{C_2^{\eta}} & \gamma_{n+1}^{\Lie}(\Lieg_2^{\eta}) }
\end{equation}
is commutative by construction.
\end{proof}

\begin{Pro}\label{xieta}
For a $n$-$\Lie$-isoclinism $(\eta, \xi): (g_1) \underset{n}\sim (g_2)$, the following statements hold:
\begin{enumerate}
\item[a)] $\xi(g)+{{\ze}_{n}^{\Lie}({\Lieg_2})}  = \eta ( g+{{\ze}_{n}^{\Lie}({\Lieg_1})} )$, for all $g \in \gamma_{n+1}^{\Lie}(\Lieg_1)  $.
\item[b)] $\xi \left( {{\ze}_{n}^{\Lie}({\Lieg_1})} \cap \gamma_{n+1}^{\Lie}(\Lieg_1)  \right) = {{\ze}_{n}^{\Lie}({\Lieg_2})}  \cap \gamma_{n+1}^{\Lie}(\Lieg_2)  $.
\item[c)]$\xi(C_1^{n+1}(\bar{x}_1,\ldots,\bar{x}_{n},\bar{g})) = C_2^{n+1}(\widetilde{\xi(x_1)},\ldots,\widetilde{\xi(x_n)},\tilde{h})$, for all $g \in \Lieg_1,$ $h \in \Lieg_2$ with     $\eta(\bar{g})=\tilde{h},$ and  $x_i \in \Lieg_1, i=1, \dots, n,$    where $\bar{g}=g+{{\ze}_{n}^{\Lie}({\Lieg_1})},$ $\bar{x_i}=x_i+{{\ze}_{n}^{\Lie}({\Lieg_1})}$, $\widetilde{\xi(x_i)} = \xi(x_i) + {{\ze}_{n}^{\Lie}({\Lieg_2})}, i=1, \dots, n,$  and $\tilde{h}= h+{{\ze}_{n}^{\Lie}({\Lieg_2})}.$
    \end{enumerate}
\end{Pro}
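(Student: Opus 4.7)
The plan is to obtain all three identities from the commutativity of the isoclinism square in Definition~\ref{isoclinic}, together with the explicit generator-wise formula for $\xi$ and the bijectivity of $\eta$ and $\xi$. For part a), every $g\in\gamma_{n+1}^{\Lie}(\Lieg_1)$ is a linear combination of iterated brackets of the form $C_1^{n+1}(\bar{x}_1,\ldots,\bar{x}_{n+1})$, so by linearity of $\xi$ and $\eta$ it suffices to treat a single generator. For $g=[[[x_1,x_2]_{lie},x_3]_{lie},\ldots,x_{n+1}]_{lie}$ the defining formula of $\xi$ gives $\xi(g)=[[[y_1,y_2]_{lie},y_3]_{lie},\ldots,y_{n+1}]_{lie}$ with any $y_i\in\eta(\bar{x}_i)$; passing to the quotient yields $\xi(g)+{\ze}_{n}^{\Lie}({\Lieg_2})=[[[\eta(\bar{x}_1),\eta(\bar{x}_2)]_{lie},\eta(\bar{x}_3)]_{lie},\ldots,\eta(\bar{x}_{n+1})]_{lie}$ in $\Lieg_2/{\ze}_{n}^{\Lie}({\Lieg_2})$. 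Since $\eta$ is a Leibniz algebra homomorphism, this same expression equals $\eta(\bar{g})$, proving a).

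For part b), the inclusion $\xi({\ze}_{n}^{\Lie}({\Lieg_1})\cap\gamma_{n+1}^{\Lie}(\Lieg_1))\subseteq{\ze}_{n}^{\Lie}({\Lieg_2})\cap\gamma_{n+1}^{\Lie}(\Lieg_2)$ is immediate from a): if $g\in{\ze}_{n}^{\Lie}({\Lieg_1})\cap\gamma_{n+1}^{\Lie}(\Lieg_1)$ then $\bar{g}=0$, whence $\xi(g)+{\ze}_{n}^{\Lie}({\Lieg_2})=\eta(\bar{g})=0$, i.e.\ $\xi(g)\in{\ze}_{n}^{\Lie}({\Lieg_2})$, and $\xi(g)\in\gamma_{n+1}^{\Lie}(\Lieg_2)$ is automatic. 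Conversely, any $h\in{\ze}_{n}^{\Lie}({\Lieg_2})\cap\gamma_{n+1}^{\Lie}(\Lieg_2)$ is $\xi(g)$ for some $g\in\gamma_{n+1}^{\Lie}(\Lieg_1)$ by surjectivity of $\xi$; a) then gives $\eta(\bar{g})=0$, and injectivity of $\eta$ forces $\bar{g}=0$, so $g\in{\ze}_{n}^{\Lie}({\Lieg_1})\cap\gamma_{n+1}^{\Lie}(\Lieg_1)$ with $\xi(g)=h$.

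Part c) is the commutativity of the isoclinism square applied to the tuple $(\bar{x}_1,\ldots,\bar{x}_n,\bar{g})$, which produces $\xi(C_1^{n+1}(\bar{x}_1,\ldots,\bar{x}_n,\bar{g}))=C_2^{n+1}(\eta(\bar{x}_1),\ldots,\eta(\bar{x}_n),\eta(\bar{g}))$; the stated form then follows from the identifications $\widetilde{\xi(x_i)}=\eta(\bar{x}_i)$ (reading $\xi(x_i)$ as any chosen lift of $\eta(\bar{x}_i)$, exactly as in the definition of $\xi$) and $\eta(\bar{g})=\tilde{h}$. No serious obstacle arises; the only care needed is in interpreting the symbol $\xi(x_i)$, which strictly speaking is defined only on $\gamma_{n+1}^{\Lie}(\Lieg_1)$, as a representative of the coset $\eta(\bar{x}_i)$ in $\Lieg_2$, consistently with the convention adopted in Definition~\ref{isoclinic}.
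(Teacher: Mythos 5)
Your proof is correct, and for parts b) and c) it follows the paper's own argument almost verbatim (the paper phrases the forward inclusion in b) as a proof by contradiction, but the content is identical). The only genuine divergence is in part a): the paper deduces it from Proposition \ref{backward}, observing that $(\xi(g),\, g+{\ze}_{n}^{\Lie}(\Lieg_1))$ lies in $\gamma_{n+1}^{\Lie}(\Lieg_2^{\eta})\subseteq\Lieg_2^{\eta}$, and the very definition of the pullback algebra $\Lieg_2^{\eta}$ then encodes the identity $\xi(g)+{\ze}_{n}^{\Lie}(\Lieg_2)=\eta(g+{\ze}_{n}^{\Lie}(\Lieg_1))$; you instead verify the identity directly on the spanning set of iterated brackets and extend by linearity. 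The two routes rest on exactly the same two facts, namely that $\gamma_{n+1}^{\Lie}(\Lieg_1)$ is linearly spanned by the elements $C_1^{n+1}(\bar{x}_1,\ldots,\bar{x}_{n+1})$ (true because the linear span of the $[m,g]_{lie}$ is already a two-sided ideal, since $[x,[m,g]_{lie}]=0$ in any Leibniz algebra), and that $\eta$ is an isomorphism of Leibniz algebras and hence compatible with the induced $[-,-]_{lie}$ on the quotients; so your version is simply a more self-contained unwinding of the computation that makes the paper's appeal to Proposition \ref{backward} work. Your closing remark on the notation $\widetilde{\xi(x_i)}$ in part c), where $\xi$ is strictly speaking only defined on $\gamma_{n+1}^{\Lie}(\Lieg_1)$, addresses a point the paper passes over silently, and reading $\xi(x_i)$ as a representative of the coset $\eta(\bar{x}_i)$ is indeed the intended interpretation.
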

\begin{proof}

{\it a)}
Let $g\in \gamma_{n+1}^{\Lie}(\Lieg_1).$
 By Proposition \ref{backward}, $(id_{\Lieg_1/{{\ze}_{n}^{\Lie}({\Lieg_1})}}, \xi') : (g_1) \underset{n}\sim (\eta^{\ast} g_2) $ is a $n$-\Lie-isoclinism  where $(\xi(g),g+{{\ze}_{n}^{\Lie}({\Lieg_1})})\in \gamma_{n+1}^{\Lie}(\Lieg_2^{\eta})$  for all $g\in\gamma_{n+1}^{\Lie}(\Lieg_1)\subseteq \Lieg_2^{\eta}.$ The result follows by definition of $\Lieg_2^{\eta}.$
\medskip

{\it b)}  Let $g \in{{\ze}_{n}^{\Lie}({\Lieg_1})} \cap\gamma_{n+1}^{\Lie}(\Lieg_1).$ Then $\xi(g)\in\gamma_{n+1}^{\Lie}({\Lieg_2}).$ Assume that $\xi(g)\notin{{\ze}_{n}^{\Lie}({\Lieg_2})} .$ Then $\xi(g)+{{\ze}_{n}^{\Lie}({\Lieg_2})} \neq {{\ze}_{n}^{\Lie}({\Lieg_2})} .$ So by a),  $\eta ( g+{{\ze}_{n}^{\Lie}({\Lieg_1})} )\neq {{\ze}_{n}^{\Lie}({\Lieg_2})} ,$ i.e. $g+{{\ze}_{n}^{\Lie}({\Lieg_1})} \notin \Ker(\eta)=0$ i.e. $g\notin {{\ze}_{n}^{\Lie}({\Lieg_1})}.$ A contradiction. So $\xi(g)\in {{\ze}_{n}^{\Lie}({\Lieg_2})}  \cap \gamma_{n+1}^{\Lie}(\Lieg_2).$

Conversely, if $h\in{{\ze}_{n}^{\Lie}({\Lieg_2})} \cap\gamma_{n+1}^{\Lie}(\Lieg_2),$ then   $h=\xi(g)$ for some $g\in \gamma_{n+1}^{\Lie}(\Lieg_1)$ since $\xi$ is onto. It follows by {\it a)} that $\eta(g+{{\ze}_{n}^{\Lie}({\Lieg_1})})=\xi(g)+{{\ze}_{n}^{\Lie}({\Lieg_2})}=h+{{\ze}_{n}^{\Lie}({\Lieg_2})}={{\ze}_{n}^{\Lie}({\Lieg_2})},$ which implies that $g\in {{\ze}_{n}^{\Lie}({\Lieg_1})}$ since $\eta$ is one-to-one, and thus $g\in{{\ze}_{n}^{\Lie}({\Lieg_1})} \cap\gamma_{n+1}^{\Lie}(\Lieg_1).$
\medskip

 {\it c)}  Since $(\eta, \xi)$ is a $n$-{\Lie}-isoclinism, then the commutativity of diagram (\ref{square isoclinic}) provides:
\[
\begin{aligned}
\xi(C_1^{n+1}(\bar{x}_1,\ldots,\bar{x}_{n},\bar{g})) &=C_2^{n+1}(\eta^{n+1}(\bar{x}_1,\ldots,\bar{x}_n,\bar{g}))\\&=C_2^{n+1}(\eta(\bar{x}_1),\ldots,\eta(\bar{x}_n),\eta(\bar{g}))\\&=C_2^{n+1}(\widetilde{\xi(x_1)},\ldots,\widetilde{\xi(x_n)},\tilde{h}).
\end{aligned}
\]
\end{proof}
\bigskip

For a $n$-\Lie-isoclinism  $(\eta, \xi): (g_1) \underset{n}\sim (g_2),$ consider the following sets:

$$ \LieK=\{(g,h)\in \Lieg_1\oplus\Lieg_2~/~\eta(g+{{\ze}_{n}^{\Lie}({\Lieg_1})})=h+{{\ze}_{n}^{\Lie}({\Lieg_2})}\},$$ $$Z_{\Lieg_1}=\{(g,0)~/~g\in{\ze}_{n}^{\Lie}(\Lieg_1)\}~~~~~~~\mbox{and}~~~~~~~~Z_{\Lieg_2}=\{(0,h)~/~h\in{\ze}_{n}^{\Lie}(\Lieg_2)\}.$$

\begin{Le}\label{iso}
The following assertions are true:
\begin{enumerate}
\item[a)] $\LieK$ is a subalgebra of $\Lieg_1\times\Lieg_2$.

\item[b)]  $\gamma_{n+1}^{\Lie}(\LieK)=\{(g,\xi(g))~|~g\in\gamma_{n+1}^{\Lie}( \Lieg_1)\}$.

\item[c)]  $Z_{\Lieg_i},~i=1,2$  are two-sided ideals of $\LieK$ satisfying $Z_{\Lieg_i}\cap\gamma_{n+1}^{\Lie}(\LieK)=0$.

\item [d)]  $\Lieg_i\cong {\LieK}/{Z_{\Lieg_j}}\underset{n}\sim\LieK,~~i,j=1,2$ with $i\neq j;$%$\Lieg_1\cong {\LieK}/{Z_{\Lieg_2}}\underset{n}\sim\LieK\underset{n}\sim {\LieK}/{Z_{\Lieg_1}} \cong \Lieg_2$.

\item [e)]  $ {\LieK}/{Z_{\Lieg_j}} \underset{n}\sim {\LieK}/{Z_{\Lieg_i}}\oplus {\LieK}/\gamma_{n+1}^{\Lie}(\LieK)$, provided that  $[{\ze}_{n-1}^{\Lie}({\Lieg_i}), {\Lieg}_i]_{\Lie} \subseteq \gamma_{n+1}^{\Lie}({\Lieg}_i)$
for $~i=1,2$.
\end{enumerate}
\end{Le}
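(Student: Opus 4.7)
The plan is to treat the five parts in sequence, exploiting the description of $\LieK$ as a kind of pullback of $\eta$ along the two projections $\Lieg_i\twoheadrightarrow\Lieg_i/\ze_n^\Lie(\Lieg_i)$, and invoking Lemma \ref{Lem1} at the end.

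Part (a) is a direct closure check: for $(g_1,h_1),(g_2,h_2)\in\LieK$, since $\eta$ is a Leibniz homomorphism, $\eta(\overline{[g_1,g_2]})=[\eta(\bar g_1),\eta(\bar g_2)]=[\widetilde{h_1},\widetilde{h_2}]=\widetilde{[h_1,h_2]}$, so $([g_1,g_2],[h_1,h_2])\in\LieK$. For (b), a generator of $\gamma_{n+1}^\Lie(\LieK)$ is an iterated Lie-bracket whose two coordinates are $C_1^{n+1}(\bar g_1,\ldots,\bar g_{n+1})$ and $C_2^{n+1}(\tilde h_1,\ldots,\tilde h_{n+1})$; the commutativity of diagram (\ref{square isoclinic}) forces the second coordinate to equal $\xi$ of the first, giving the stated form $(g,\xi(g))$ with $g\in\gamma_{n+1}^\Lie(\Lieg_1)$.

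Part (c) is short: $Z_{\Lieg_i}\subseteq\LieK$ because the class of $g\in\ze_n^\Lie(\Lieg_1)$ in $\Lieg_1/\ze_n^\Lie(\Lieg_1)$ is zero and $\eta(0)=0$; each $Z_{\Lieg_i}$ is a two-sided ideal of $\LieK$ because $\ze_n^\Lie(\Lieg_i)$ is one of $\Lieg_i$; and an element of $Z_{\Lieg_1}\cap\gamma_{n+1}^\Lie(\LieK)$ is simultaneously $(g,0)$ and $(g,\xi(g))$, forcing $g=0$ by injectivity of $\xi$ (the case $i=2$ is symmetric). For (d), the projections $\pi_i:\LieK\to\Lieg_i$ are surjective because $\eta$ is, and a direct inspection gives $\ker\pi_i=Z_{\Lieg_j}$ for $j\neq i$; hence $\Lieg_i\cong\LieK/Z_{\Lieg_j}$, and the isoclinism $\LieK/Z_{\Lieg_j}\underset{n}\sim\LieK$ follows from Lemma \ref{Lem1}~\emph{d)} applied to the ideal $Z_{\Lieg_j}$, using $Z_{\Lieg_j}\cap\gamma_{n+1}^\Lie(\LieK)=0$ from (c).

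For (e) the plan is to pivot on the isomorphisms from (d) and close via transitivity (Proposition \ref{equivalence relation}). By (d), the claim reduces to $\Lieg_i\underset{n}\sim\Lieg_j\oplus(\LieK/\gamma_{n+1}^\Lie(\LieK))$; since $\LieK/\gamma_{n+1}^\Lie(\LieK)$ is $\Lie$-nilpotent of class at most $n$, Lemma \ref{Lem1}~\emph{a)} yields $\Lieg_j\underset{n}\sim\Lieg_j\oplus(\LieK/\gamma_{n+1}^\Lie(\LieK))$, and chaining with the given $\Lieg_i\underset{n}\sim\Lieg_j$ finishes the argument. I expect the main technical point to be how the stated hypothesis $[\ze_{n-1}^\Lie(\Lieg_i),\Lieg_i]_\Lie\subseteq\gamma_{n+1}^\Lie(\Lieg_i)$ enters: it is exactly what makes Lemma \ref{Lem1}~\emph{b)} applicable, which suggests an alternative route that embeds $\LieK$ diagonally in $(\LieK/Z_{\Lieg_i})\oplus(\LieK/\gamma_{n+1}^\Lie(\LieK))$ via $k\mapsto(k+Z_{\Lieg_i},k+\gamma_{n+1}^\Lie(\LieK))$, whose kernel is zero by (c), and then enlarges by the $n$-$\Lie$-center. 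Either route reaches the claimed isoclinism.
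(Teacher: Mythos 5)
Your arguments for parts \emph{a)}--\emph{d)} coincide with the paper's: the same closure check, the same identification of $\gamma_{n+1}^{\Lie}(\LieK)$ via commutativity of diagram (\ref{square isoclinic}) (equivalently Proposition \ref{xieta} \emph{a)}), injectivity of $\xi$ for \emph{c)}, and the projections plus the first isomorphism theorem plus Lemma \ref{Lem1} \emph{d)} for \emph{d)}. Part \emph{e)} is where you genuinely diverge. The paper embeds $\LieK$ diagonally into ${\LieK}/{Z_{\Lieg_i}}\oplus {\LieK}/\gamma_{n+1}^{\Lie}(\LieK)$ via $k\mapsto(k+Z_{\Lieg_i},k+\gamma_{n+1}^{\Lie}(\LieK))$, identifies $\LieK$ with its image $\LieK_{\Lieg_i}$ using \emph{c)}, and then applies the ``consequently'' clause of Lemma \ref{Lem1} \emph{b)}, since the whole direct sum equals $\LieK_{\Lieg_i}+{\ze}_{n}^{\Lie}(\cdot)$; the bulk of its proof is the verification that the stated hypothesis $[{\ze}_{n-1}^{\Lie}({\Lieg_i}),{\Lieg}_i]_{\Lie}\subseteq\gamma_{n+1}^{\Lie}({\Lieg}_i)$ transports to the requirement $[{\ze}_{n-1}^{\Lie}(\LieK_{\Lieg_i}),\LieK_{\Lieg_i}]_{\Lie}\subseteq\gamma_{n+1}^{\Lie}(\LieK_{\Lieg_i})$ of Lemma \ref{Lem1} \emph{b)}. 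This is exactly the ``alternative route'' you mention at the end, and it is the route where the hypothesis actually enters. Your primary route --- reduce via \emph{d)} to $\Lieg_i\underset{n}\sim\Lieg_j\oplus\LieK/\gamma_{n+1}^{\Lie}(\LieK)$, apply Lemma \ref{Lem1} \emph{a)} to absorb the \Lie-nilpotent summand, and close by transitivity --- is valid and notably cleaner; it buys you the statement without ever invoking the hypothesis on $[{\ze}_{n-1}^{\Lie}({\Lieg_i}),{\Lieg}_i]_{\Lie}$, which for the bare existence of the isoclinism is therefore superfluous. Two caveats are worth recording: Lemma \ref{Lem1} \emph{a)} is stated for $\Lien$ of class (exactly) $n$, whereas $\LieK/\gamma_{n+1}^{\Lie}(\LieK)$ is only guaranteed to have class at most $n$ --- its proof goes through verbatim in that generality (it only uses $\gamma_{n+1}^{\Lie}(\Lien)=0$ and ${\ze}_{n}^{\Lie}(\Lien)=\Lien$), but you are relying on the proof rather than the literal statement; and the paper's more laborious construction is not gratuitous, since the explicit algebra $\LieL={\LieK}/{Z_{\Lieg_2}}\oplus\LieK/\gamma_{n+1}^{\Lie}(\LieK)$ and the diagonal maps are reused in the proof of Proposition \ref{Prop1}, which your abstract transitivity argument would not supply.
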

\begin{proof}
 The proof of {\it a)}  is  straightforward.
 {\it b)} follows from the property {\it a)} of Proposition \ref{xieta}.
\medskip

  For {\it c)}, it is easy to check that $Z_{\Lieg_i}, i=1, 2,$  are two-sided ideals of ${\LieK}$ since ${\ze}_{n}^{\Lie}(\Lieg_i)$   are two-sided ideals of $\Lieg_i, i=1, 2.$  That $Z_{\Lieg_i}\cap\gamma_{n+1}^{\Lie}(\LieK)=0,$ $i=1,2,$ is due to $\xi$ being one-to-one.
\medskip

  To prove  {\it d)}, it is easy to check that the maps $\tau_i: \LieK\to \Lieg_i$ defined by $\tau_i(g_1,g_2)=g_i$ are surjective homomorphisms with kernel $\Ker(\tau_i)=Z_{\Lieg_j},$ $i,j=1,2$ with $i\neq j.$ The isomorphisms follow by the first isomorphism theorem.
  The isoclinisms follow from  Lemma \ref{Lem1} {\it d)} since $Z_{\Lieg_i}\cap\gamma_{n+1}^{\Lie}(\LieK)=0,~i=1,2$, by statement {\it c)}.
\medskip

  To prove {\it e)}, consider the sets $\LieK_{\Lieg_i}=\{(k+Z_{\Lieg_i},k+\gamma_{n+1}^{\Lie}(\LieK))~/~k\in\LieK\},~i=1,2.$ Then it is clear that $\LieK_{\Lieg_i}$ are subalgebras of $\LieK/Z_{\Lieg_i}\oplus \LieK/\gamma_{n+1}^{\Lie}(\LieK)$ and the mappings $\alpha_i: \LieK\to\LieK_{\Lieg_i}$ defined by $\alpha_i(k)=(k+Z_{\Lieg_i},k+\gamma_{n+1}^{\Lie}(\LieK))$ are surjective homomorphisms by definition, and one-to-one due to  $Z_{\Lieg_i}\cap\gamma_{n+1}^{\Lie}(\LieK)=0.$ So $\LieK$ is isomorphic to $\LieK_{\Lieg_i},~i=1,2.$

   Now  apply Lemma \ref{Lem1} {\it b)} with $\Lieg:=\LieK/Z_{\Lieg_i}\oplus \LieK/\gamma_{n+1}^{\Lie}(\LieK),$ and $\Lieh:=\LieK_{\Lieg_i},$ then statement {\it  d)} concludes the proof.

  Keep in mind that Lemma \ref{Lem1} {\it b)} is well applied since the hypotheses imply the requirements of  Lemma \ref{Lem1} {\it b)}.
 Indeed, for the case $i=1$,  assume that $[{\ze}_{n-1}^{\Lie}({\Lieg_1}), {\Lieg_1}]_{\Lie}\subseteq \gamma_{n+1}^{\Lie}(\Lieg_1),$ and let $k=(g,h)\in \LieK$ such that
          $(k+Z_{\Lieg_1},k+\gamma_{n+1}^{\Lie}(\LieK))\in {\ze}_{n-1}^{\Lie}(\LieK_{\Lieg_1}).$ This implies that for every $g_1,\ldots,g_{n-1}\in \Lieg_1,$ and appropriate $h_1,\ldots,h_{n-1}\in \Lieg_2$  so that $k_1,\ldots,k_{n-1}\in\LieK $ with $k_i=(g_i,h_i),$ we have $$[[[k,k_1]_{lie},k_2]_{lie},\ldots,k_{n-1}]_{lie}\in Z_{\Lieg_1}\cap\gamma_{n+1}^{\Lie}(\LieK)=0,$$ and thus $$([[[g,g_1]_{lie},g_2]_{lie},\ldots,g_{n-1}]_{lie},[[[h,h_1]_{lie},h_2]_{lie},\ldots,h_{n-1}]_{lie})=0.$$  So $[[[g,g_1]_{lie},g_2]_{lie},\ldots,g_{n-1}]_{lie}=0$ which implies that  $g\in{{\ze}_{n-1}^{\Lie}({\Lieg_1})}.$
          Therefore, for all $k'=(g',h')\in \LieK,$  $[(k+Z_{\Lieg_1},k+\gamma_{n+1}^{\Lie}(\LieK)),(k'+Z_{\Lieg_1},k'+\gamma_{n+1}^{\Lie}(\LieK))]_{lie}= (([g,g']_{lie},[h,h']_{lie})+Z_{\Lieg_1},([g,g']_{lie},[h,h']_{lie})+\gamma_{n+1}^{\Lie}(\LieK))\in \gamma_{n+1}^{\Lie}(\LieK_{\Lieg_1})$ since $[g,g']_{lie}\in [{\ze}_{n-1}^{\Lie}({\Lieg_1}), {\Lieg_1}]_{\Lie} \subseteq\gamma_{n+1}^{\Lie}(\Lieg_1).$

          The proof for $i=2$ is similar since $\eta$ and $\xi$ are isomorphisms.
\end{proof}
\bigskip

As a consequence of Lemma \ref{iso},  we have the following  characterization of $n$-\Lie-isoclinism classes of Leibniz algebras.

\begin{Co} \label{isoclinic pair}
Given $\Lieg_1$ and $\Lieg_2, $ two Leibniz algebras.  Then $\Lieg_1$ and $\Lieg_2 $ are  $n$-$\Lie$-isoclinic if and only if
there exist a Leibniz algebra $\Lieh$ that  is $n$-\Lie-isoclinic  to $\Lieg_1,$ and a surjective homomorphism $\theta$ from $\Lieh$ onto $\Lieg_2$  such that $\Ker(\theta)\cap\gamma_{n+1}^{\Lie}(\Lieh)=0.$
\end{Co}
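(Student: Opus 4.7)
The plan is to handle the two implications separately, exploiting the preceding Lemmas \ref{Lem1} and \ref{iso} which have already packaged the bulk of the work.

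For the forward implication, assume $\Lieg_1 \underset{n}\sim \Lieg_2$ via some $n$-\Lie-isoclinism $(\eta,\xi)$. My first move is to form the fiber product subalgebra $\LieK \subseteq \Lieg_1 \times \Lieg_2$ associated to $(\eta,\xi)$ as in Lemma \ref{iso}. By Lemma \ref{iso} {\it d)} (applied with $i=1$, $j=2$) we have a natural isomorphism $\Lieg_1 \cong \LieK/Z_{\Lieg_2}$ together with $\LieK/Z_{\Lieg_2} \underset{n}\sim \LieK$, so $\LieK$ is $n$-\Lie-isoclinic to $\Lieg_1$. On the other hand, the projection $\tau_2 : \LieK \to \Lieg_2$, $\tau_2(g,h)=h$, is a surjective homomorphism with $\Ker(\tau_2)=Z_{\Lieg_1}$, and Lemma \ref{iso} {\it c)} gives exactly $Z_{\Lieg_1} \cap \gamma_{n+1}^{\Lie}(\LieK)=0$. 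Setting $\Lieh := \LieK$ and $\theta := \tau_2$ then satisfies all the required conditions.

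For the backward implication, assume that $\Lieh$ is $n$-\Lie-isoclinic to $\Lieg_1$ and that $\theta : \Lieh \twoheadrightarrow \Lieg_2$ is a surjection with $\Ker(\theta)\cap \gamma_{n+1}^{\Lie}(\Lieh)=0$. Applying Lemma \ref{Lem1} {\it d)} to the ideal $\LieI := \Ker(\theta)$ of $\Lieh$ yields $\Lieh \underset{n}\sim \Lieh/\Ker(\theta)$, and the first isomorphism theorem identifies $\Lieh/\Ker(\theta) \cong \Lieg_2$. Chaining this with the hypothesized isoclinism $\Lieg_1 \underset{n}\sim \Lieh$ and using the transitivity part of Proposition \ref{equivalence relation} delivers $\Lieg_1 \underset{n}\sim \Lieg_2$.

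Both directions are essentially direct assemblies of earlier results, so there is no single sharp obstacle; the only real choice is which earlier lemma supplies which piece. The forward direction is the more substantive one, and the main thing to get right is to recognize that the subalgebra $\LieK$ introduced before Lemma \ref{iso} is precisely the universal witness: it is isoclinic to $\Lieg_1$ and it projects onto $\Lieg_2$ with kernel trivial on $\gamma_{n+1}^{\Lie}(\LieK)$, which is exactly the packaging the statement asks for.
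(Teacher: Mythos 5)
Your proposal is correct and follows essentially the same route as the paper: both directions take $\Lieh=\LieK$ with $\theta=\tau_2$ for the forward implication (using Lemma \ref{iso} parts {\it c)} and {\it d)}), and both use Lemma \ref{Lem1} {\it d)} applied to $\Ker(\theta)$ plus transitivity for the converse. No issues.
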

\begin{proof}
Assume that  $\Lieg_1$ and $\Lieg_2 $ are  $n$-$\Lie$-isoclinic and take $\Lieh=\LieK.$ Then by the proof of Lemma \ref{iso}, we have $\Lieh\underset{n}\sim\Lieg_1$ and the surjective homomorphism  $\theta:=\tau_2: \LieK\to \Lieg_2$ defined by $\tau_2(g_1,g_2)=g_2, $ satisfying $\Ker(\theta)\cap\gamma_{n+1}^{\Lie}(\Lieh)=Z_{\Lieg_1}\cap\gamma_{n+1}^{\Lie}(\LieK)=0.$

Conversely, let $\Lieh$ be a Leibniz algebra such that $\Lieh\underset{n}\sim\Lieg_1$ and  $\theta:\Lieh\to\Lieg_2$ be a surjective homomorphism satisfying $\Ker(\theta)\cap\gamma_{n+1}^{\Lie}(\Lieh)=0.$  Then by the property {\it d)} of Lemma \ref{Lem1},  $\Lieg_1\underset{n}\sim\Lieh\underset{n}\sim\frac{\Lieh}{\Ker(\theta)}\cong\Lieg_2.$
\end{proof}

\begin{Co}\label{n-1}
Given $\Lieg_1$ and $\Lieg_2, $ two Leibniz algebras such that $\Lieg_2$ is isomorphic to $\Lieg_1/\LieI$ for some two-sided ideal $\LieI$ of $\Lieg_1$ satisfying $\LieI\cap \gamma_{n+1}^{\Lie}(\Lieg_1)=0,$ then the following hold:
\begin{enumerate}
\item[a)]  $\frac{\Lieg_1}{{\ze}_{n}^{\Lie}({\Lieg_1})}$ is $(n$-$k)$-\Lie-isoclinic  to $\frac{\Lieg_2}{{\ze}_{n}^{\Lie}({\Lieg_2})},~$ for $k=0,\ldots, n.$

\item[b)]  $\gamma_{k+1}^{\Lie}(\Lieg_1)$  is $(n$-$k)$-\Lie-isoclinic  to $\gamma_{k+1}^{\Lie}(\Lieg_2),~$ for $k=0,\ldots, n.$

\item[c)] $\Lieg_1$  is $m$-$\Lie$-isoclinic to $\Lieg_2, ~$ for all $m\geq n.$
\end{enumerate}
\end{Co}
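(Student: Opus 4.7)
The plan is to extract one key observation from the hypothesis and then derive each of the three statements from Lemma \ref{Lem1} {\it d)} (together with the trivial fact that isomorphic Leibniz algebras are $m$-\Lie-isoclinic for every $m$). The preparatory step is to notice that $\LieI \subseteq {\ze}_{n}^{\Lie}(\Lieg_1)$: since $\LieI$ is a two-sided ideal of $\Lieg_1$, a short induction shows $\gamma_{n+1}^{\Lie}(\Lieg_1, \LieI) \subseteq \LieI \cap \gamma_{n+1}^{\Lie}(\Lieg_1) = 0$, and the characterisation of $n$-\Lie-central extensions recalled in Section \ref{preliminaries} identifies this with $\LieI \subseteq {\ze}_{n}^{\Lie}(\Lieg_1)$.

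For {\it a)} I would prove the stronger statement that the canonical projection induces an \emph{isomorphism} $\Lieg_1/{\ze}_{n}^{\Lie}(\Lieg_1) \to \Lieg_2/{\ze}_{n}^{\Lie}(\Lieg_2)$. The crux is the identity ${\ze}_{n}^{\Lie}(\Lieg_1/\LieI) = {\ze}_{n}^{\Lie}(\Lieg_1)/\LieI$: unravelling the inductive definition of the upper \Lie-central series, $x + \LieI$ lies in ${\ze}_{n}^{\Lie}(\Lieg_1/\LieI)$ exactly when every $n$-fold iterated Lie bracket $[[[x,g_1]_{lie},g_2]_{lie},\ldots,g_n]_{lie}$ with $g_1,\ldots,g_n \in \Lieg_1$ lies in $\LieI$; but such an element is also in $\gamma_{n+1}^{\Lie}(\Lieg_1)$, hence in $\LieI \cap \gamma_{n+1}^{\Lie}(\Lieg_1) = 0$, forcing $x \in {\ze}_{n}^{\Lie}(\Lieg_1)$. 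The reverse containment is immediate, and the third isomorphism theorem then yields the desired isomorphism.

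For {\it b)}, set $\Lien := \gamma_{k+1}^{\Lie}(\Lieg_1)$. The projection $\Lieg_1 \twoheadrightarrow \Lieg_2$ restricts to a surjection $\Lien \twoheadrightarrow \gamma_{k+1}^{\Lie}(\Lieg_2)$ with kernel $\LieI \cap \Lien$, so Lemma \ref{Lem1} {\it d)} will deliver the $(n-k)$-\Lie-isoclinism once I verify $(\LieI \cap \Lien) \cap \gamma_{n-k+1}^{\Lie}(\Lien) = 0$. I would reduce this to the inclusion $\gamma_{n-k+1}^{\Lie}(\Lien) \subseteq \gamma_{n+1}^{\Lie}(\Lieg_1)$, which follows from the elementary induction $\gamma_a^{\Lie}(\Lien) \subseteq \gamma_{k+a}^{\Lie}(\Lieg_1)$: the step $\gamma_{a+1}^{\Lie}(\Lien) = [\gamma_a^{\Lie}(\Lien), \Lien]_{\Lie} \subseteq [\gamma_{k+a}^{\Lie}(\Lieg_1), \Lieg_1]_{\Lie} = \gamma_{k+a+1}^{\Lie}(\Lieg_1)$ uses only $\Lien \subseteq \Lieg_1$ and the definition of $\gamma$. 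Part {\it c)} is then essentially free: $\gamma_{m+1}^{\Lie}(\Lieg_1) \subseteq \gamma_{n+1}^{\Lie}(\Lieg_1)$ for $m \geq n$, whence $\LieI \cap \gamma_{m+1}^{\Lie}(\Lieg_1) = 0$ and Lemma \ref{Lem1} {\it d)} yields $\Lieg_1 \underset{m}\sim \Lieg_1/\LieI \cong \Lieg_2$.

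The genuine obstacle, I expect, is the computation in {\it a)} of ${\ze}_{n}^{\Lie}$ under quotient by $\LieI$; once the containment $\LieI \subseteq {\ze}_{n}^{\Lie}(\Lieg_1)$ is extracted from the hypothesis and the identification ${\ze}_{n}^{\Lie}(\Lieg_1/\LieI) = {\ze}_{n}^{\Lie}(\Lieg_1)/\LieI$ is in hand, everything else is routine bookkeeping with the lower \Lie-central series.
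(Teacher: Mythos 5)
Your parts b) and c) follow the paper's own argument almost verbatim: the paper likewise sets $\Liet:=\LieI\cap\gamma_{k+1}^{\Lie}(\Lieg_1)$, checks that $\Liet$ meets $\gamma_{n+1-k}^{\Lie}\bigl(\gamma_{k+1}^{\Lie}(\Lieg_1)\bigr)$ trivially via the inclusion $\gamma_a^{\Lie}\bigl(\gamma_{k+1}^{\Lie}(\Lieg_1)\bigr)\subseteq\gamma_{k+a}^{\Lie}(\Lieg_1)$, and invokes Lemma \ref{Lem1} d); part c) is the same one-line application of that lemma in both texts. Your preparatory observation that $\LieI\subseteq{\ze}_{n}^{\Lie}(\Lieg_1)$, and the identity ${\ze}_{n}^{\Lie}(\Lieg_1/\LieI)={\ze}_{n}^{\Lie}(\Lieg_1)/\LieI$, are both correct as argued.

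For part a) you genuinely diverge. You prove the statement as literally printed (both quotients taken by the $n$-th \Lie-center) and in fact obtain the sharper conclusion $\Lieg_1/{\ze}_{n}^{\Lie}(\Lieg_1)\cong\Lieg_2/{\ze}_{n}^{\Lie}(\Lieg_2)$, from which every $(n-k)$-\Lie-isoclinism follows trivially. The paper's proof, however, works throughout with ${\ze}_{k}^{\Lie}(\Lieg_i)$ rather than ${\ze}_{n}^{\Lie}(\Lieg_i)$: it takes $\Lies$ with ${\ze}_{k}^{\Lie}(\Lieg_1/\LieI)=\Lies/\LieI$, shows only $\Lies\cap\gamma_{n+1-k}^{\Lie}(\Lieg_1)\subseteq{\ze}_{k}^{\Lie}(\Lieg_1)$, and then applies Lemma \ref{Lem1} d) to get $\Lieg_1/{\ze}_{k}^{\Lie}(\Lieg_1)\underset{n-k}\sim\Lieg_2/{\ze}_{k}^{\Lie}(\Lieg_2)$. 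This strongly indicates the printed subscript $n$ is a typo for $k$, and that the intended claim is the $k$-indexed family familiar from $n$-isoclinism of groups and Lie algebras. Your argument is exactly the $k=n$ member of that family; for $k<n$ it does not adapt, because one cannot expect $\Lies\subseteq{\ze}_{k}^{\Lie}(\Lieg_1)$ (only its intersection with $\gamma_{n+1-k}^{\Lie}(\Lieg_1)$ lands there), which is precisely why the paper must route through Lemma \ref{Lem1} d) rather than an isomorphism theorem. So: complete and even stronger for the statement as printed, but missing the cases $0\leq k<n$ of what the authors evidently meant to assert.
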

\begin{proof}
 To prove {\it a)}, let  $\Lies$ be a two-sided ideal of $\Lieg_1$  such that ${\ze}^{\Lie}_{k}(\Lieg_1/\LieI)=\Lies/\LieI.$  One easily verifies that ${\ze}^{\Lie}_{k}(\Lieg_1)\subseteq \Lies.$
Now let  $g\in \Lies\cap \gamma_{n+1-k}^{\Lie}(\Lieg_1).$ Then for all $h_1,h_2,\ldots,h_k\in\Lieg_1,$ we have $[[[g,h_1]_{lie},h_2]_{lie},\ldots,h_k]_{lie}\in \LieI\cap \gamma_{n+1}^{\Lie}(\Lieg_1)=0.$ So $g\in{\ze}^{\Lie}_{k}(\Lieg_1)$ and thus $\Lies\cap \gamma_{n+1-k}^{\Lie}(\Lieg_1)\subset {\ze}^{\Lie}_{k}(\Lieg_1).$ This implies that $\frac{\Lies}{{\ze}^{\Lie}_{k}(\Lieg_1)}\cap\gamma_{n+1-k}^{\Lie}(\frac{\Lieg_1}{{\ze}^{\Lie}_{k}(\Lieg_1)})= 0.$ It follows by the property {\it d)} of Lemma \ref{Lem1} that $\frac{\Lieg_1}{{\ze}^{\Lie}_{k}(\Lieg_1)}$ is  $(n-k)$-{\Lie}-isoclinic to $\frac{\Lieg_1}{{\ze}^{\Lie}_{k}(\Lieg_1)}/\frac{\Lies}{{\ze}^{\Lie}_{k}(\Lieg_1)}
\cong\Lieg_1/\Lies\cong \frac{\Lieg_1}{\LieI}/\frac{\Lies}{\LieI}\cong \frac{\Lieg_1}{\LieI}/{\ze}^{\Lie}_{n}(\frac{\Lieg_1}{\LieI})\cong \frac{\Lieg_2}{{\ze}^{\Lie}_{k}(\Lieg_2)}.$
\medskip

{\it b)} Let $k\in\{0,\ldots, n\}$ and set $\Liet:=\LieI\cap \gamma_{k+1}^{\Lie}(\Lieg_1).$ It is not hard to verify that $\Liet\cap\gamma_{n+1-k}^{\Lie}(\Lieg_1)\subseteq \LieI\cap\gamma_{n+1}^{\Lie}(\Lieg_1)=0.$ So again by the property {\it d)} of Lemma \ref{Lem1}, we have  $\gamma_{k+1}^{\Lie}(\Lieg_1)$  is $(n$-$k)$-\Lie-isoclinic  to $\frac{\gamma_{k+1}^{\Lie}(\Lieg_1)}{\Liet}\cong \frac{\gamma_{k+1}^{\Lie}(\Lieg_1)+\LieI}{\LieI}\cong \gamma_{k+1}^{\Lie}(\frac{\Lieg_1}{\LieI})\cong \gamma_{k+1}^{\Lie}(\Lieg_2).$
\medskip

For {\it c)}, the $m$-$\Lie$-isoclinism $\Lieg_1\underset{m}\sim\frac{\Lieg_1}{\LieI}\cong\Lieg_2$ is obtained also by the property {\it d)} of Lemma \ref{Lem1}, since $\LieI\cap \gamma_{m+1}^{\Lie}(\Lieg_1)\subseteq \LieI\cap \gamma_{n+1}^{\Lie}(\Lieg_1)=0.$
\end{proof}

\begin{Rem}
If $\Lieg_1$ and $\Lieg_2$ are $n$-{\Lie}-isoclinic Leibniz algebras, then there exists the two-sided ideal ${\cal J}$ satisfying the requirements of Corollary \ref{n-1} thanks to Corollary \ref{isoclinic pair}. Other broad class of algebras satisfying the requirements of Corollary \ref{n-1} are  {\Lie}-nilpotent Leibniz algebras of class $n$.

Another example of non {\Lie}-nilpotent Leibniz algebra satisfying the requirements of Corollary \ref{n-1} is the three-dimensional Leibniz algebra  $\Lieg_1$ with basis $\{a_1,a_2,a_3 \}$, with bracket operation $[a_1,a_3]=a_1$ (see algebra 2 d) in the classification given in \cite{CILL}). Take the two-sided ideal ${\cal J}=  \langle \{a_2\} \rangle$, $\gamma_{n+1}^{\Lie} (\Lieg_1) = \langle \{a_1 \} \rangle$, hence the intersection is zero. Take $\Lieg_2= \Lieg_1/{\cal J} = \langle \{ a_1, a_3 \} \rangle$.
\end{Rem}

\begin{Pro}\label{Prop1}
Given $\Lieg_1$ and $\Lieg_2, $ two $n$-$\Lie$-isoclinic Leibniz algebras. Then there exist two Leibniz algebras $\Lieh_1$ and $\Lieh_2$ and a {\Lie}-nilpotent Leibniz algebra $\Lien$ of class at least $n$ satisfying the following:
 \begin{enumerate}
\item[a)]  $\Lieg_1$ is $n$-\Lie-isoclinic to $\Lieh_1\oplus\Lien$.

\item[b)]  $\Lieg_2$  is $(n$-$1)$-\Lie-isoclinic  to $\Lieh_2$.

\item[c)]  $\Lieh_2$ is $n$-\Lie-isoclinic  to $\Lieh_2+{{\ze}_{n}^{\Lie}({\Lieh_1\oplus\Lien})}$, provided that $[Z_{n-1}^{\Lie}({\Lieg}_2), {\Lieg}_2]_{\Lie} \subseteq \gamma_{n+1}^{\Lie}({\Lieg}_2)$.

\item[d)]   $\Lieh_1\oplus\Lien=\Lieh_2+{{\ze}_{n}^{\Lie}({\Lieh_1\oplus\Lien})}$ if $\Lieg_1$ and $\Lieg_2 $ are finite dimensional  and $[Z_{n-1}^{\Lie}({\Lieg}_2), {\Lieg}_2]_{\Lie}$ $\subseteq \gamma_{n+1}^{\Lie}({\Lieg}_2)$.
\end{enumerate}
\end{Pro}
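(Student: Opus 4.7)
The plan is to leverage the machinery of Lemma~\ref{iso}: form the pullback-type algebra $\LieK$ associated to the given $n$-$\Lie$-isoclinism $(\eta,\xi):\Lieg_1 \underset{n}{\sim} \Lieg_2$, and define $\Lieh_1 := \LieK/Z_{\Lieg_1}$, $\Lien := \LieK/\gamma_{n+1}^{\Lie}(\LieK)$, and $\Lieh_2 := \LieK_{\Lieg_1}$, i.e.\ the diagonal subalgebra of $\Lieh_1 \oplus \Lien$ introduced in the proof of Lemma~\ref{iso}~e). By construction $\gamma_{n+1}^{\Lie}(\Lien)=0$, so $\Lien$ is $\Lie$-nilpotent of class at most $n$; the wording ``at least $n$'' in the statement appears to be a minor slip, since the construction furnishes class at most $n$, and this is what the downstream arguments actually use.

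For part~(a), I would use Lemma~\ref{iso}~d) to identify $\Lieg_1 \cong \LieK/Z_{\Lieg_2}$ and then invoke Lemma~\ref{iso}~e) to conclude $\Lieg_1 \underset{n}{\sim} \Lieh_1 \oplus \Lien$. For part~(b), the chain $\Lieh_2 \cong \LieK \underset{n}{\sim} \Lieg_2$ from Lemma~\ref{iso}~d), combined with the projection $\LieK \twoheadrightarrow \Lieg_2$ of kernel $Z_{\Lieg_1}$ satisfying $Z_{\Lieg_1}\cap\gamma_{n+1}^{\Lie}(\LieK)=0$ (Lemma~\ref{iso}~c)), permits invoking Corollary~\ref{n-1} to extract the claimed $(n{-}1)$-$\Lie$-isoclinism $\Lieg_2 \underset{n-1}{\sim} \Lieh_2$ from that $n$-$\Lie$-isoclinism.

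For part~(c), I would apply Lemma~\ref{Lem1}~b) with ambient algebra $\Lieg := \Lieh_1 \oplus \Lien$ and subalgebra $\Lieh := \Lieh_2$; the hypothesis $[\ze_{n-1}^{\Lie}(\Lieg_2),\Lieg_2]_{\Lie} \subseteq \gamma_{n+1}^{\Lie}(\Lieg_2)$ transfers through the isomorphism $\Lieh_2 \cong \LieK$ (using the fact that the upper and lower $\Lie$-central series of $\LieK$ project onto those of $\Lieg_2$ while the kernel $Z_{\Lieg_1}$ sits outside $\gamma_{n+1}^{\Lie}(\LieK)$) to the required condition $[\ze_{n-1}^{\Lie}(\Lieh_2),\Lieh_2]_{\Lie} \subseteq \gamma_{n+1}^{\Lie}(\Lieh_2)$, so Lemma~\ref{Lem1}~b) applies directly. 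For part~(d), combining part~(a) with part~(c) via transitivity (Proposition~\ref{equivalence relation}) yields $\Lieh_1 \oplus \Lien \underset{n}{\sim} \Lieh_2$; finite dimensionality of $\Lieg_1$ propagates to finite dimensionality of $(\Lieh_1 \oplus \Lien)/\ze_n^{\Lie}(\Lieh_1 \oplus \Lien)$, so Lemma~\ref{Lem1}~c) then delivers the equality $\Lieh_1 \oplus \Lien = \Lieh_2 + \ze_n^{\Lie}(\Lieh_1 \oplus \Lien)$.

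The main obstacle I expect is part~(b): $n$-$\Lie$-isoclinism does not a priori imply $(n{-}1)$-$\Lie$-isoclinism, since the two relations compare isomorphisms of independent quotients and central subalgebras. Extracting the correct $(n{-}1)$-level therefore requires careful use of the specific kernel behaviour of the surjection $\LieK \twoheadrightarrow \Lieg_2$ via Corollary~\ref{n-1}, rather than a mere reindexing of the $n$-isoclinism data. Once part~(b) is in hand, the remaining verifications are routine applications of Lemma~\ref{Lem1} and the transitivity of $n$-$\Lie$-isoclinism given by Proposition~\ref{equivalence relation}.
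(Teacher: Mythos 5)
Your overall instinct---build everything out of the algebra $\LieK$ attached to the isoclinism---matches the paper, but your specific choices of $\Lieh_1$, $\Lieh_2$, $\Lien$ differ from the paper's in ways that break the argument. The paper does not work inside $\LieK/Z_{\Lieg_1}\oplus\LieK/\gamma_{n+1}^{\Lie}(\LieK)$: it first forms the quotient $\LieL/\LieH$ of $\LieL=\LieK/Z_{\Lieg_2}\oplus\LieK/\gamma_{n+1}^{\Lie}(\LieK)$ by an ideal $\LieH$ built from $\ze_{n}^{\Lie}(\Lieg_1)\cap\gamma_{n}^{\Lie}(\Lieg_1)$, and sets $\Lieh_1=\alpha_1(\LieK)\cong\LieK/Z_{\Lieg_2}\cong\Lieg_1$, $\Lieh_2=\alpha_2(\LieK)\cong\LieK/(Z_{\Lieg_1}\cap\gamma_{n}^{\Lie}(\LieK))$, and $\Lien$ the image of the second factor. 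With these choices part a) follows from Lemma \ref{Lem1} a) alone (adjoining a $\Lie$-nilpotent summand of class $n$ to $\Lieh_1\cong\Lieg_1$), with no side conditions. Your route to a) instead invokes Lemma \ref{iso} e), which carries the hypothesis $[\ze_{n-1}^{\Lie}(\Lieg_i),\Lieg_i]_{\Lie}\subseteq\gamma_{n+1}^{\Lie}(\Lieg_i)$ for $i=1,2$; Proposition \ref{Prop1} a) assumes nothing of the sort, so this proves a strictly weaker statement.

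The decisive gap is part b). With your definition $\Lieh_2=\LieK_{\Lieg_1}\cong\LieK$, Corollary \ref{n-1} cannot deliver $\Lieg_2\underset{n-1}\sim\Lieh_2$: its conclusions are $(n-k)$-$\Lie$-isoclinisms between the quotients by $k$-centers or between the terms $\gamma_{k+1}^{\Lie}$, and $m$-$\Lie$-isoclinisms only for $m\geq n$---never an $(n-1)$-$\Lie$-isoclinism between the two algebras themselves. You correctly flag this as the obstacle but do not resolve it. The paper's resolution is precisely to take $\Lieh_2\cong\LieK/(Z_{\Lieg_1}\cap\gamma_{n}^{\Lie}(\LieK))$ rather than $\LieK$: Lemma \ref{Lem1} d) read at level $n-1$ (so with $\gamma_{n}^{\Lie}$ in place of $\gamma_{n+1}^{\Lie}$) then gives $\Lieg_2\cong\LieK/Z_{\Lieg_1}\underset{n-1}\sim\LieK/(Z_{\Lieg_1}\cap\gamma_{n}^{\Lie}(\LieK))=\Lieh_2$ immediately. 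A further soft spot is c): transferring $[\ze_{n-1}^{\Lie}(\Lieg_2),\Lieg_2]_{\Lie}\subseteq\gamma_{n+1}^{\Lie}(\Lieg_2)$ to $\Lieh_2\cong\LieK$ is not routine, because an element of $\ze_{n-1}^{\Lie}(\LieK)$ constrains both coordinates while $\gamma_{n+1}^{\Lie}(\LieK)$ is the graph of $\xi$, so one would also need control of $[\ze_{n-1}^{\Lie}(\Lieg_1),\Lieg_1]_{\Lie}$, which is not hypothesized; the paper's verification is carried out in $\LieL/\LieH$ and uses only the $\Lieg_2$-hypothesis. (Minor: the paper's $\Lien$ satisfies $\gamma_{j}^{\Lie}(\Lien)=0$ exactly for $j\geq n+1$, i.e.\ it has class exactly $n$, which is what Lemma \ref{Lem1} a) requires; ``at least $n$'' is therefore not a slip to be replaced by ``at most $n$''.)
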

\begin{proof}  Keeping in mind the above notations,  consider the set ${\LieH}=\{\big((g,0)+Z_{\Lieg_2}, (g,0)+\gamma_{n+1}^{\Lie}({\LieK})\big)/~g\in{\ze}_{n}^{\Lie}({\Lieg_1})\cap \gamma_{n}^{\Lie}(\Lieg_1)\}.$ It is easy to check that ${\LieH}$ is a two-sided ideal of ${\LieL}:={\LieK}/Z_{{\Lieg}_2}\oplus \LieK/\gamma_{n+1}^{\Lie}(\LieK).$ %and $\LieH\cap \gamma_{n+1}^{\Lie}(\LieL)=0.$

Now define the mappings $\alpha_1, \alpha_2:\LieK\to \LieL/\LieH$ respectively by
$$\alpha_1(t)=\big(t+Z_{\Lieg_2}, \gamma_{n+1}^{\Lie}(\LieK)\big)+\LieH~~\mbox{ and}~~\alpha_2(t)=\big(t+Z_{\Lieg_2}, t+\gamma_{n+1}^{\Lie}(\LieK)\big)+\LieH.$$
It is easy to check that $\alpha_1$ and $\alpha_2$ are Leibniz algebra homomorphisms with  $\Ker(\alpha_1)=Z_{\Lieg_2}$ and $\Ker(\alpha_2)=Z_{\Lieg_1}\cap \gamma_{n}^{\Lie}(\LieK)$. Now set $\Lieh_1:=\alpha_1(\LieK),~$ $\Lieh_2:=\alpha_2(\LieK)~$ and   $~\Lien=\{(Z_{\Lieg_2},t+\gamma_{n+1}^{\Lie}(\LieK))+\LieH~|~t\in\LieK\}.$
 Then one easily verifies that  ${\Lien}$ is  {\Lie}-nilpotent  of class at least $n$  since  $\gamma_{j}^{\Lie}(\Lien)=\{(Z_{\Lieg_2},\gamma_{n+1}^{\Lie}(\LieK))\}$ iff $\gamma_{j}^{\Lie}(\LieK)\subseteq\gamma_{n+1}^{\Lie}(\LieK),$ iff $j\geq n+1.$ Now combining the properties  {\it a)} and {\it d)}  of Lemma \ref{Lem1}  and Lemma \ref{iso} {\it d)} with the first isomorphism theorem, we have
 $${\Lieh}_1\oplus {\Lien}\underset{n}\sim {\Lieh}_1=\alpha_1({\LieK})\cong\frac{{\LieK}}{\Ker(\alpha_1)}\cong\frac{\LieK}{Z_{\Lieg_2}}\cong\Lieg_1,$$
   and
   $${\Lieh}_2=\alpha_2(\LieK)\cong\frac{\LieK}{\Ker(\alpha_2)}=\frac{\LieK}{Z_{\Lieg_1}\cap \gamma_{n}^{\Lie}(\LieK)}\underset{n-1}\sim\frac{\LieK}{Z_{\Lieg_1}}\cong\Lieg_2.$$
     This proves {\it a)} and {\it b)}.
\medskip

 The result  {\it c)} is due to    the property  {\it b)}  of Lemma \ref{Lem1} since $\Lieh_2$ is a subalgebra of ${\Lieh}_1\oplus{\Lien}$ and the
 condition $[{\ze}_{n-1}^{\Lie}(\frak{h}_2), {\frak h}_2]_{\Lie} \subseteq \gamma_{n+1}^{\Lie}(\frak{h}_2)$ holds.
Indeed, for $k=(x,y), k'=(x',y')\in {\LieK}$ such that $\alpha_1(k)\in {\ze}_{n-1}^{\Lie}(\frak{h}_2)$, then $y\in {\ze}_{n-1}^{\Lie}(\Lieg_2)$, since for $y_1,\ldots ,y_{n-1}\in\Lieg_2,$ and appropriate $x_1,\ldots ,x_{n-1}\in\Lieg_1,$  such that $k_i=(x_i,y_i)\in {\LieK}, i = 1, \dots, n-1$,  and keeping in mind that $k\in {\ze}_{n-1}^{\Lie}(\frak{h}_2),$  we have $\big[[(k+Z_{\Lieg_2},k+\gamma_{n+1}^{\Lie}(\LieK)),(k_1+Z_{\Lieg_2},k_1+\gamma_{n+1}^{\Lie}(\LieK))]_{lie},\dots,(k_{n-1}+Z_{\Lieg_2},k_{n-1}+\gamma_{n+1}^{\Lie}(\LieK))\big]_{lie}\in \LieH,$ i.e.   $[[[k,k_1]_{lie},\ldots,k_{n-1}]_{lie}-(g,0)\in  Z_{\Lieg_2}\cap\gamma_{n+1}^{\Lie}(\LieK)=0$  for some $g\in{\ze}_{n}^{\Lie}({\Lieg_1})\cap \gamma_{n}^{\Lie}(\Lieg_1).$ This implies that $[[[x,x_1]_{lie},x_2]_{lie},\ldots,x_{n-1}]_{lie}=g$ and $[[[y,y_1]_{lie},y_2]_{lie},\ldots,y_{n-1}]_{lie}=0,$ and thus $y\in {\ze}_{n-1}^{\Lie}(\Lieg_2).$

Now we have  $[(k+Z_{\Lieg_2},k+\gamma_{n+1}^{\Lie}(\LieK)),(k'+Z_{\Lieg_2},k'+\gamma_{n+1}^{\Lie}(\LieK))]_{lie}+\LieH= (([x,x']_{lie},$ $[x,x']_{lie})+Z_{\Lieg_2},([x,x']_{lie},[y,y']_{lie})+\gamma_{n+1}^{\Lie}(\LieK))+\LieH \in \gamma_{n+1}^{\Lie}(\Lieh_2)$  since $[y,y']_{lie}\in [{\ze}_{n-1}^{\Lie}({\Lieg_2}), {\Lieg_2}]_{\Lie} \subseteq\gamma_{n+1}^{\Lie}(\Lieg_2).$
\medskip

To prove  {\it d)}, we have by {\it  b)} that ${\Lieg_2} \underset{n-1}\sim {\Lieh}_2.$ So by the property {\it c)} of Corollary \ref{n-1}, $\Lieg_2\underset{n}\sim\Lieh_2.$
 So $\Lieh_1\oplus\Lien\underset{n}\sim\Lieg_1\underset{n}\sim\Lieg_2\underset{n}\sim\Lieh_2.$ The result now follows from  the property  {\it c)}  of Lemma \ref{Lem1} since ${\LieK}$ is finite dimensional.  Keep in mind that $[{\ze}_{n-1}^{\Lie}(\frak{h}_2), {\frak h}_2]_{\Lie} \subseteq \gamma_{n+1}^{\Lie}(\frak{h}_2)$ is showed in the proof of statement {\it c)}.
\end{proof}

\begin{De} \label{isoclinic1}
A homomorphism of $n$-$\Lie$-central extensions $(\alpha, \beta, \gamma) : (g_1) \to (g_2)$ is said to be $n$-$\Lie$-isoclinic, if there exists an isomorphism $\beta_{ n+1}' :  \gamma_{n+1}^{\Lie}(\Lieg_1) \to \gamma_{n+1}^{\Lie}(\Lieg_2)$ with $(\gamma, \beta_{n+1}') : (g_1) \underset{n}\sim (g_2)$. The mapping $\beta$ is referred to as a  $n$-$\Lie$-isoclinic homomorphism.

If $\beta$ is in addition an epimorphism (resp., monomorphism), then  $(\alpha, \beta, \gamma)$ is called a $n$-\Lie-isoclinic epimorphism (resp., monomorphism).
\end{De}

\begin{Pro} \label{equivalence}
A homomorphism of $n$-{\Lie}-central extensions $(\alpha, \beta, \gamma) : (g_1) \to (g_2)$ is  $n$-$\Lie$-isoclinic if and only if $\gamma$ is an isomorphism and $\Ker(\beta) \cap \gamma_{n+1}^{\Lie}(\Lieg_1) =0$.
\end{Pro}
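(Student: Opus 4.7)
The plan is to identify the isomorphism $\beta_{n+1}'$ from Definition \ref{isoclinic1} with the restriction $\beta|_{\gamma_{n+1}^{\Lie}(\Lieg_1)}$ and to read both implications off this identification. The key observation is that the commutativity of the homomorphism-of-extensions diagram forces $\gamma(\bar{x})=\widetilde{\beta(x)}$ for every $x\in\Lieg_1$, so on a generator $C_1^{n+1}(\bar{x}_1,\ldots,\bar{x}_{n+1})$ of $\gamma_{n+1}^{\Lie}(\Lieg_1)$ any $\beta_{n+1}'$ making the isoclinism square of Definition \ref{isoclinic} commute is forced to satisfy
\[
\beta_{n+1}'\bigl(C_1^{n+1}(\bar{x}_1,\ldots,\bar{x}_{n+1})\bigr)
= C_2^{n+1}\bigl(\widetilde{\beta(x_1)},\ldots,\widetilde{\beta(x_{n+1})}\bigr)
= \beta\bigl(C_1^{n+1}(\bar{x}_1,\ldots,\bar{x}_{n+1})\bigr),
\]
and since such generators span $\gamma_{n+1}^{\Lie}(\Lieg_1)$ by the inductive definition of the lower $\Lie$-central series, this forces $\beta_{n+1}'=\beta|_{\gamma_{n+1}^{\Lie}(\Lieg_1)}$.

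For $(\Rightarrow)$, if $(\alpha,\beta,\gamma)$ is $n$-$\Lie$-isoclinic then Definition \ref{isoclinic} gives that $\gamma$ is an isomorphism, and the preceding identification shows that $\beta|_{\gamma_{n+1}^{\Lie}(\Lieg_1)}=\beta_{n+1}'$ is an isomorphism; in particular it is injective, yielding $\Ker(\beta)\cap\gamma_{n+1}^{\Lie}(\Lieg_1)=0$. Conversely, if $\gamma$ is an isomorphism and $\Ker(\beta)\cap\gamma_{n+1}^{\Lie}(\Lieg_1)=0$, I set $\beta_{n+1}':=\beta|_{\gamma_{n+1}^{\Lie}(\Lieg_1)}$; as a restriction of a Leibniz homomorphism landing in $\gamma_{n+1}^{\Lie}(\Lieg_2)$ it is well defined, injective by the kernel hypothesis, and the isoclinism square commutes by the same computation as above.

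The remaining task, which is the main obstacle, is to verify surjectivity of $\beta_{n+1}'$. Given a generator $[[[y_1,y_2]_{lie},\ldots,y_{n+1}]_{lie}$ of $\gamma_{n+1}^{\Lie}(\Lieg_2)$, the surjectivity of $\gamma$ produces $x_i\in\Lieg_1$ with $y_i=\beta(x_i)+z_i$, $z_i\in{\ze}_n^{\Lie}(\Lieg_2)$. The well-definedness of $C_2^{n+1}$ as a map on $(\Lieg_2/{\ze}_n^{\Lie}(\Lieg_2))^{n+1}$---a feature of the $n$-$\Lie$-centrality of $(g_2)$ that legitimises Definition \ref{isoclinic}---then lets me replace each $y_i$ by $\beta(x_i)$ inside the iterated bracket without changing its value, so the generator equals $\beta(C_1^{n+1}(\bar{x}_1,\ldots,\bar{x}_{n+1}))\in\beta(\gamma_{n+1}^{\Lie}(\Lieg_1))$. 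The crux is therefore precisely this well-definedness, namely that altering an argument of the iterated $[-,-]_{lie}$-bracket of length $n+1$ by an element of ${\ze}_n^{\Lie}$ leaves the value unchanged, which is exactly what the $n$-$\Lie$-central hypothesis on $(g_2)$ guarantees.
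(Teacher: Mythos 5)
Your proposal is correct and follows essentially the same route as the paper's proof: both directions rest on the observation that commutativity of the isoclinism square forces $\beta_{n+1}'$ to agree with $\beta$ on the generators $C_1^{n+1}(\bar{x}_1,\ldots,\bar{x}_{n+1})$ (so that injectivity of $\beta_{n+1}'$ is equivalent to $\Ker(\beta)\cap\gamma_{n+1}^{\Lie}(\Lieg_1)=0$), and surjectivity in the converse is obtained from surjectivity of $\gamma$ exactly as in the paper. Your explicit remarks that the generators span $\gamma_{n+1}^{\Lie}(\Lieg_1)$ and that $C_2^{n+1}$ is well defined modulo ${\ze}_n^{\Lie}(\Lieg_2)$ make the argument, if anything, slightly more careful than the published one, which records the identification $\beta_{n+1}'=\beta|_{\gamma_{n+1}^{\Lie}(\Lieg_1)}$ only in a remark afterwards.
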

\begin{proof}  Suppose that $(\alpha, \beta, \gamma) : (g_1) \to (g_2)$ is a $n$-$\Lie$-isoclinic homomorphism. Then $(\gamma, \beta') : (g_1) \underset{n}\sim (g_2)$ is a  $n$-$\Lie$-isoclinism for some isomorphism $\beta' : \gamma_{n+1}^{\Lie}(\Lieg_1) \to \gamma_{n+1}^{\Lie}(\Lieg_2),$ and   $\gamma: \frac{\Lieg_1}{{\ze}_{n}^{\Lie}({\Lieg_1})} \to \frac{\Lieg_2}{{\ze}_{n}^{\Lie}({\Lieg_2})}$ is an isomorphism. Let $g\in \Ker(\beta) \cap  \gamma_{n+1}^{\Lie}(\Lieg_1).$ Then $\beta(g)=0$ and $g=C_1^{n+1}(\bar{x}_1,\ldots,\bar{x}_{n+1})$  with $\bar{x}_i=x_i+{\ze}_{n}^{\Lie}({\Lieg_1}),$ for some $x_1,\ldots, x_{n+1}\in \Lieg_1.$
Since $(\gamma, \beta') : (g_1) \underset{n}\sim (g_2)$ is a  $n$-$\Lie$-isoclinism, we have
$$
\begin{aligned}
\beta'(g)&=\beta'(C_1^{n+1}(\bar{x}_1,\ldots,\bar{x}_{n+1}))\\&=C_2^{n+1}\big(\gamma^{n+1}(\bar{x}_1,\ldots,\bar{x}_{n+1})\big)\\&=C_2^{n+1}\big(\gamma(\bar{x}_1),\ldots,\gamma(\bar{x}_{n+1})\big)\\&=C_2^{n+1}\big(\widetilde{\beta({x}_1)}, \ldots,\widetilde{\beta({x}_{n+1})}\big)\\&=\beta(C_1^{n+1}(\bar{x}_1,\ldots,\bar{x}_{n+1}))=\beta(g)=0.
\end{aligned}$$
 Therefore $g=0$ since $\beta'$ is one-to-one.

Conversely, suppose that  $\Ker(\beta) \cap \gamma_{n+1}^{\Lie}(\Lieg_1) =0,$ then define $\beta':\gamma_{n+1}^{\Lie}(\Lieg_1)\to \gamma_{n+1}^{\Lie}(\Lieg_2)$ by $\beta'(g)=\beta(g).$ Clearly $\beta'$ is  one-to-one due to $\Ker(\beta) \cap \gamma_{n+1}^{\Lie}(\Lieg_1) =0.$ To show that $\beta'$ is onto, let $h\in  \gamma_n^{\Lie}(\Lieg_2).$ Then $h=[[[y_1,y_2]_{lie},y_3]_{lie},\ldots,y_{n+1}]_{lie}=C_2^{n+1}(\tilde{y}_1,\ldots,\tilde{y}_{n+1})$ for some $y_1,\ldots, y_{n+1}\in\Lieg_2.$ Since  $\gamma_1$ is onto, we have $\tilde{y_i}=\gamma(\bar{x}_i)=\widetilde{\beta(x_i)},$ $~i=1,\ldots,n+1.$ It follows that
\[\begin{aligned}
\beta'(C_1^{n+1}(\bar{x}_1,\ldots,\bar{x}_{n+1}))&=\beta(C_1^{n+1}(\bar{x}_1,\ldots,\bar{x}_{n+1}))\\&=C_2^{n+1}\big(\widetilde{\beta({x}_1)}, \ldots,\widetilde{\beta({x}_{n+1})}\big)\\&=C_2^{n+1}(\tilde{y}_1,\ldots,\tilde{y}_{n+1})=y.
\end{aligned}\]
\end{proof}

\begin{Rem}
From the proof of Proposition \ref{equivalence}, it follows that the isomorphism $\beta_{n+1}'$  in Definition \ref{isoclinic1} is the restriction of $\beta$ to $\gamma_{n+1}^{\Lie}(\Lieg_1).$
\end{Rem}

\begin{Pro} \label{character}
Let $\beta : \Lieg_1 \to \Lieg_2$ be a homomorphism of Leibniz algebras. Then $\beta$ induces a $n$-$\Lie$-isoclinic homomorphism from $(g_1)$ to $(g_2)$ if and only if $\Ker(\beta) \cap \gamma_{n+1}^{\Lie}(\Lieg_1)=0$ and $\Image(\beta) +  {{\ze}_{n}^{\Lie}({\Lieg_2})}  = \Lieg_2.$
\end{Pro}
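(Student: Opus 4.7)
The plan is to reduce both implications to Proposition \ref{equivalence}, once I have verified that $\beta$ sends the $n$-$\Lie$-center of $\Lieg_1$ into that of $\Lieg_2$, so that it actually induces a morphism $(\alpha,\beta,\gamma):(g_1)\to(g_2)$ of the two $n$-$\Lie$-central extensions. The forward implication will then be essentially a rewriting: if $(\alpha,\beta,\gamma)$ is $n$-$\Lie$-isoclinic, Proposition \ref{equivalence} gives $\Ker(\beta)\cap\gamma_{n+1}^{\Lie}(\Lieg_1)=0$ and that the induced map $\gamma:\Lieg_1/{{\ze}_{n}^{\Lie}({\Lieg_1})}\to\Lieg_2/{{\ze}_{n}^{\Lie}({\Lieg_2})}$ is an isomorphism, and surjectivity of $\gamma$ translates verbatim to $\Image(\beta)+{{\ze}_{n}^{\Lie}({\Lieg_2})}=\Lieg_2$.

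For the converse, I would first verify $\beta({{\ze}_{n}^{\Lie}({\Lieg_1})})\subseteq{{\ze}_{n}^{\Lie}({\Lieg_2})}$ by induction on $n$. Given $z\in{{\ze}_{n}^{\Lie}({\Lieg_1})}$ and any $h\in\Lieg_2$, the hypothesis $\Image(\beta)+{{\ze}_{n}^{\Lie}({\Lieg_2})}=\Lieg_2$ lets me write $h=\beta(x)+z'$ with $x\in\Lieg_1$ and $z'\in{{\ze}_{n}^{\Lie}({\Lieg_2})}$, so that
\[
[\beta(z),h]_{lie}=\beta([z,x]_{lie})+[\beta(z),z']_{lie}.
\]
The second summand lies in ${{\ze}_{n-1}^{\Lie}({\Lieg_2})}$ by the definition of the upper $\Lie$-central series, and the first lies there by the inductive hypothesis applied to $[z,x]_{lie}\in{{\ze}_{n-1}^{\Lie}({\Lieg_1})}$; hence $\beta(z)\in{{\ze}_{n}^{\Lie}({\Lieg_2})}$. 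This yields a morphism $(\alpha,\beta,\gamma):(g_1)\to(g_2)$ with $\alpha:=\beta|_{{{\ze}_{n}^{\Lie}({\Lieg_1})}}$ and $\gamma$ the induced map on quotients.

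It then remains to check that $\gamma$ is an isomorphism, after which Proposition \ref{equivalence} closes the argument since $\Ker(\beta)\cap\gamma_{n+1}^{\Lie}(\Lieg_1)=0$ is assumed. Surjectivity of $\gamma$ is again the hypothesis $\Image(\beta)+{{\ze}_{n}^{\Lie}({\Lieg_2})}=\Lieg_2$. For injectivity, suppose $\beta(g)\in{{\ze}_{n}^{\Lie}({\Lieg_2})}$; for arbitrary $x_1,\dots,x_n\in\Lieg_1$ the element $u=[[[g,x_1]_{lie},x_2]_{lie},\dots,x_n]_{lie}$ belongs to $\gamma_{n+1}^{\Lie}(\Lieg_1)$, while $\beta(u)=[[[\beta(g),\beta(x_1)]_{lie},\beta(x_2)]_{lie},\dots,\beta(x_n)]_{lie}=0$ because iterating the bracket $n$ times against $\beta(g)\in{{\ze}_{n}^{\Lie}({\Lieg_2})}$ drops down to ${{\ze}_{0}^{\Lie}({\Lieg_2})}=0$. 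Hence $u\in\Ker(\beta)\cap\gamma_{n+1}^{\Lie}(\Lieg_1)=0$, and since the $x_i$ were arbitrary this forces $g\in{{\ze}_{n}^{\Lie}({\Lieg_1})}$. The main obstacle is the inductive step $\beta({{\ze}_{n}^{\Lie}({\Lieg_1})})\subseteq{{\ze}_{n}^{\Lie}({\Lieg_2})}$, which is precisely where the surjectivity hypothesis is essential; the remaining verifications are then formal consequences of Proposition \ref{equivalence}.
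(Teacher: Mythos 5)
Your proposal is correct and follows essentially the same route as the paper's own proof: both directions are reduced to Proposition \ref{character}'s companion result (Proposition \ref{equivalence}), the inclusion $\beta({\ze}_{n}^{\Lie}(\Lieg_1))\subseteq{\ze}_{n}^{\Lie}(\Lieg_2)$ is established by the same induction using $\Image(\beta)+{\ze}_{n}^{\Lie}(\Lieg_2)=\Lieg_2$, and injectivity of $\gamma$ is obtained by the same argument pushing $[[[g,x_1]_{lie},\ldots,x_n]_{lie}$ into $\Ker(\beta)\cap\gamma_{n+1}^{\Lie}(\Lieg_1)=0$. No substantive differences to report.
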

\begin{proof}
Let $(\alpha,\beta,\gamma) : (g_1) \to (g_2)$ be a $n$-$\Lie$-isoclinic homomorphism induced by $\beta.$ Then  by Proposition \ref{equivalence},  $\Ker(\beta) \cap  \gamma_{n+1}^{\Lie}(\Lieg_1)=0.$ To show that $\Image(\beta)+ {{\ze}_{n}^{\Lie}({\Lieg_2})}  = \Lieg_2,$   let $h\in\Lieg_2.$ Since $\gamma$ is  onto and $(\alpha,\beta,\gamma)$ is a homomorphism of ${n}$-{\Lie}-central extensions,  we have $\tilde{h}=\gamma(\bar{g})=\widetilde{\beta(g)}$ for some $g\in\Lieg_1.$ It follows that $h-\beta(g)\in {\ze}_{\Lie}(\Lieg_2)$ i.e, $h=\beta(g)+z$  for some $z\in {\ze}_{\Lie}(\Lieg_2).$ This proves $\Lieg_2\subseteq\Image(\beta) +  {{\ze}_{n}^{\Lie}({\Lieg_2})},$
the other inclusion being obvious.

 Conversely, suppose that $\Ker(\beta) \cap \gamma_{n+1}^{\Lie}(\Lieg_1)=0$ and $\Image(\beta) +  {{\ze}_{n}^{\Lie}({\Lieg_2})}  = \Lieg_2.$    We claim that the maps $\alpha:=\beta|_{{\ze}_{n}^{\Lie}({\Lieg_1})}$  and $\gamma:\Lieg_1/{{\ze}_{n}^{\Lie}({\Lieg_1})}\to \Lieg_2/{{\ze}_{n}^{\Lie}({\Lieg_2})}$ defined by $\gamma(\bar{{g}})=\widetilde{{\beta(g)}}$ are well-defined homomorphisms for every integer $n.$ We prove by induction that $\beta( {{\ze}_{n}^{\Lie}({\Lieg_1})}  )\subseteq  {{\ze}_{n}^{\Lie}({\Lieg_2})} .$ Let $x\in {{\ze}_{n}^{\Lie}({\Lieg_1})}$ and $h\in\Lieg_2,$ then $h=\beta(g)+z$ for some $g\in\Lieg_1$ and $z\in {{\ze}_{n}^{\Lie}({\Lieg_2})}. $ By inductive hypothesis, we have $\beta([x,g]_{lie}) \in  {{\ze}_{n-1}^{\Lie}({\Lieg_2})}$
as $[x,g]_{lie}\in {{\ze}_{n-1}^{\Lie}({\Lieg_1})},$ so
$$
\begin{aligned}
~~[\beta(x),h]_{lie}&= [\beta(x),\beta(g)+z]_{lie}\\&=[\beta(x),\beta(g)]_{lie}+[\beta(x),z]_{lie}\\&=\beta([x,g]_{lie})+[\beta(x),z]_{lie} \in  {{\ze}_{n-1}^{\Lie}({\Lieg_2})},
\end{aligned}
$$
and thus $\beta(x)\in  {{\ze}_{n}^{\Lie}({\Lieg_2})}.$ One easily verifies that $(\alpha,\beta,\gamma): (g_1)\to (g_2)$  is a homomorphism of ${n}$-{\Lie}-central extensions. It remains to show that it is $n$-\Lie-isoclinic. By Proposition \ref{equivalence}, it is enough to show that $\gamma$  is an isomorphism.  $\gamma$ is onto because  every $h\in\Lieg_2$ can be written as $h=\beta(g)+z$ for some $g\in\Lieg_1$ and $z\in {{\ze}_{n}^{\Lie}({\Lieg_2})},$ yielding $\bar{h}=\widetilde{\beta(g)}=\gamma(\bar{g}).$
To  show that $\gamma$ is one-to-one, let $x\in\Lieg_1$ satisfying $\gamma(\bar{x})=0$, i.e. $\beta(x)\in {{\ze}_{n}^{\Lie}({\Lieg_2})}.$  Let $g_1,\ldots,g_n\in\Lieg_1.$  We need to show that $t:=[[[x,g_1]_{lie},g_2]_{lie},\ldots,g_n]_{lie}=0.$ Indeed, $\beta(t)=[[[\beta(x),\beta(g_1)]_{lie},\beta(g_2)]_{lie},\ldots,\beta(g_n)]_{lie}=0$ since $\beta(x)\in {{\ze}_{n}^{\Lie}({\Lieg_2})}.$ So $t\in\Ker(\beta) \cap \gamma_{n+1}^{\Lie}(\Lieg_1)=0.$ Hence $x\in{{\ze}_{n}^{\Lie}({\Lieg_1})}.$ This completes the proof.
\end{proof}

%%%%%%%%%%%%%%%%%%%%%%%%%%%%%%%%%%%%%%%%%%%%%%%%%%%%%%%%%%%%%%%%%%%%%%%%%%%%%%%%%%%%%%%%%%%%%%%%%%%%%%%%%%%%%%%%%%%%%%%%%%%%%%%%%%%%%%%

 \section{Some properties on  $n$-$\Lie$-stem Leibniz algebras} \label{stem section}

\begin{De}
A Leibniz algebra $\Lieg$ is said to be $n$-$\Lie$-stem Leibniz algebra whenever ${{\ze}_{n}^{\Lie}({\Lieg)}}\subseteq \gamma_{n+1}^{\Lie}(\Lieg)$.
\end{De}

The lemma below characterizes $n$-$\Lie$-stem Leibniz algebras.

\begin{Le}\label{stem}
A Leibniz algebra  $\Lieg$ is a $n$-$\Lie$-stem Leibniz algebra if and only if $\LieI\cap\gamma_{n+1}^{\Lie}(\Lieg)\neq0$ for all nonzero two-sided ideal $\LieI$ of $\Lieg.$
\end{Le}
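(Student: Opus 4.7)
The biconditional splits naturally into two implications: a clean contradiction in one direction, and a contrapositive ideal construction in the other.

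For $(\Rightarrow)$, suppose $\mathcal{Z}_n^{\Lie}(\Lieg)\subseteq\gamma_{n+1}^{\Lie}(\Lieg)$ and let $\LieI$ be a nonzero two-sided ideal of $\Lieg$. Arguing by contradiction, assume $\LieI\cap\gamma_{n+1}^{\Lie}(\Lieg)=0$. For any $x\in\LieI$ and $g_1,\ldots,g_n\in\Lieg$, the iterated $\Lie$-bracket $[[\cdots[x,g_1]_{lie},g_2]_{lie},\cdots,g_n]_{lie}$ lies in $\LieI$, because $[y,g]_{lie}=[y,g]+[g,y]$ remains in $\LieI$ whenever $y$ does (by the two-sided ideal property), and simultaneously lies in $\gamma_{n+1}^{\Lie}(\Lieg)$ by the recursive definition of the lower $\Lie$-central series. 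Hence it vanishes for every choice of $g_1,\ldots,g_n$. Unwinding the defining relation ${\ze}_i^{\Lie}(\Lieg)=C_{\Lieg}^{\Lie}(\Lieg,{\ze}_{i-1}^{\Lie}(\Lieg))$ inductively, this vanishing forces $x\in{\ze}_n^{\Lie}(\Lieg)$. Hence $\LieI\subseteq{\ze}_n^{\Lie}(\Lieg)\subseteq\gamma_{n+1}^{\Lie}(\Lieg)$, which combined with $\LieI\cap\gamma_{n+1}^{\Lie}(\Lieg)=0$ forces $\LieI=0$, contradicting our choice.

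For $(\Leftarrow)$, I would argue the contrapositive: assuming ${\ze}_n^{\Lie}(\Lieg)\not\subseteq\gamma_{n+1}^{\Lie}(\Lieg)$, exhibit a nonzero two-sided ideal $\LieI$ with $\LieI\cap\gamma_{n+1}^{\Lie}(\Lieg)=0$. Since ${\ze}_n^{\Lie}(\Lieg)$ is itself a two-sided ideal of $\Lieg$, a vector space splitting ${\ze}_n^{\Lie}(\Lieg)=({\ze}_n^{\Lie}(\Lieg)\cap\gamma_{n+1}^{\Lie}(\Lieg))\oplus W$ produces a nonzero subspace $W$ with $W\cap\gamma_{n+1}^{\Lie}(\Lieg)=0$. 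The plan is then to refine $W$ to a genuine two-sided ideal of $\Lieg$ retaining this disjointness; once $\LieI$ is produced, Lemma \ref{Lem1}(d) immediately gives $\Lieg\underset{n}\sim\Lieg/\LieI$, exhibiting $\Lieg$ as $n$-$\Lie$-isoclinic to a proper quotient and aligning the criterion with the intuition that $n$-$\Lie$-stem Leibniz algebras are the minimal representatives in their $n$-$\Lie$-isoclinism class.

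The main obstacle is the Leibniz-specific difficulty that a complementary subspace inside an ideal is not automatically an ideal: in contrast to the Lie setting, where an element $z\in Z(\Lieg)\setminus[\Lieg,\Lieg]$ at once spans a one-dimensional ideal disjoint from the commutator, here for $z\in Z_{\Lie}(\Lieg)$ the individual brackets $[g,z]$ and $[z,g]$ can be nonzero even though $[g,z]_{lie}=0$. To circumvent this I would descend through the upper $\Lie$-central series to the smallest $k$ with ${\ze}_k^{\Lie}(\Lieg)\not\subseteq\gamma_{n+1}^{\Lie}(\Lieg)$, so that ${\ze}_{k-1}^{\Lie}(\Lieg)\subseteq\gamma_{n+1}^{\Lie}(\Lieg)$, pick $z\in{\ze}_k^{\Lie}(\Lieg)\setminus\gamma_{n+1}^{\Lie}(\Lieg)$, and exploit the fact that $[g,z]_{lie}\in{\ze}_{k-1}^{\Lie}(\Lieg)\subseteq\gamma_{n+1}^{\Lie}(\Lieg)$ for every $g\in\Lieg$. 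This forces the image of $z$ in $\Lieg/\gamma_{n+1}^{\Lie}(\Lieg)$ to be $\Lie$-central, so that a one-dimensional subspace there lifts back to $\Lieg$ as a nonzero two-sided ideal whose only elements in $\gamma_{n+1}^{\Lie}(\Lieg)$ must have been zero to begin with.
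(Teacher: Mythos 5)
Your forward implication is correct and is essentially the paper's own argument: for $x\in\LieI$ with $\LieI\cap\gamma_{n+1}^{\Lie}(\Lieg)=0$, the $n$-fold iterated brackets $[[\cdots[x,g_1]_{lie},\cdots]_{lie},g_n]_{lie}$ lie in $\LieI\cap\gamma_{n+1}^{\Lie}(\Lieg)=0$, forcing $\LieI\subseteq{\ze}_n^{\Lie}(\Lieg)\subseteq\gamma_{n+1}^{\Lie}(\Lieg)$ and hence $\LieI=0$ (you even use the correct number $n$ of entries, where the paper writes $n+1$).

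The backward implication is where the problem lies, and you have correctly diagnosed the obstacle but your plan does not overcome it. Passing to the least $k$ with ${\ze}_k^{\Lie}(\Lieg)\not\subseteq\gamma_{n+1}^{\Lie}(\Lieg)$ and working in $\Lieg/\gamma_{n+1}^{\Lie}(\Lieg)$ only shows that $\bar z$ is $\Lie$-central there, and $\Lie$-centrality still does not make $\mathrm{span}\{\bar z\}$ a two-sided ideal: it gives $[\bar q,\bar z]+[\bar z,\bar q]=0$, not $[\bar q,\bar z]\in\mathrm{span}\{\bar z\}$, so the very difficulty you named reappears in the quotient. Moreover, even granted an ideal of the quotient, its preimage in $\Lieg$ is $\mathrm{span}\{z\}+\gamma_{n+1}^{\Lie}(\Lieg)$, whose intersection with $\gamma_{n+1}^{\Lie}(\Lieg)$ is all of $\gamma_{n+1}^{\Lie}(\Lieg)$; lifting back destroys exactly the disjointness you need. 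The paper's proof of this direction simply takes $\LieI=\mathrm{span}\{g\}$ for $g\in{\ze}_n^{\Lie}(\Lieg)\setminus\gamma_{n+1}^{\Lie}(\Lieg)$ and asserts it is a two-sided ideal --- precisely the unjustified step you flagged --- and the paper's own Example 3.7 shows it can fail: there $a_2\in{\ze}_n^{\Lie}(\Lieh)\setminus\gamma_{n+1}^{\Lie}(\Lieh)$ for $n\ge 2$, yet $[a_3,a_2]=a_4\notin\mathrm{span}\{a_2\}$. Worse, in that algebra any two-sided ideal containing $v=\alpha a_1+\beta a_2+\gamma a_3+\delta a_4$ also contains $[a_3,v]=-\alpha a_3+\beta a_4\in\gamma_{n+1}^{\Lie}(\Lieh)=\mathrm{span}\{a_3,a_4\}$, so every nonzero two-sided ideal of $\Lieh$ meets $\gamma_{n+1}^{\Lie}(\Lieh)$ although $\Lieh$ is not $n$-$\Lie$-stem for $n\ge 2$. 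So this direction of the lemma is false as stated, and no ideal construction can rescue your (or the paper's) argument without an additional hypothesis; the issue is genuinely Leibniz-specific, since in the Lie case $\mathrm{span}\{z\}$ for central $z$ really is an ideal.
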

\begin{proof}
Assume that $\Lieg$ is not a $n$-$\Lie$-stem Leibniz algebra and let $g\in {{\ze}_{n}^{\Lie}({\Lieg)}} $ with $g\notin \gamma_{n+1}^{\Lie}(\Lieg).$ Consider the  two-sided  ideal {$\LieI:= \text{span} \{g \}.$} Then $\LieI$ is a nonzero two-sided ideal of $\Lieg$ satisfying $\LieI\cap\gamma_{n+1}^{\Lie}(\Lieg)=0.$

Conversely, suppose that  $\Lieg$ is a $n$-$\Lie$-stem Leibniz algebra and let $\LieI$ be a two-sided  ideal of $\Lieg$ satisfying $\LieI\cap\gamma_{n+1}^{\Lie}(\Lieg)=0.$  Then $\LieI\subseteq {{\ze}_{n}^{\Lie}({\Lieg)}}$ since $[[[x,g_1]_{lie},g_2]_{lie},\ldots,g_{n+1}]_{lie}\in \LieI\cap\gamma_{n+1}^{\Lie}(\Lieg)=0$ for all  $x\in\LieI$ and $g_1,\ldots,g_{n+1}\in\Lieg.$ So $\LieI=\LieI\cap{{\ze}_{n}^{\Lie}({\Lieg)}}\subseteq \LieI\cap\gamma_{n+1}^{\Lie}(\Lieg)=0,$ which implies that $\LieI=0.$
\end{proof}

\begin{Co}\label{existence}
Every Leibniz algebra is $n$-{\Lie}-isoclinic to some $n$-{\Lie}-stem Leibniz algebra.
\end{Co}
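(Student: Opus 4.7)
The plan is to construct the desired $n$-$\Lie$-stem Leibniz algebra as a quotient $\Lieg/\LieI$ for a cleverly chosen two-sided ideal $\LieI$, and then invoke Lemma \ref{Lem1} \emph{d)} to conclude the $n$-$\Lie$-isoclinism, and Lemma \ref{stem} to verify the stem property.

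First, I would consider the family
\[
\mathcal{F} = \{\, \LieI \trianglelefteq \Lieg \,\mid\, \LieI \cap \gamma_{n+1}^{\Lie}(\Lieg) = 0 \,\},
\]
ordered by inclusion. This family is nonempty (the zero ideal belongs to it) and closed under unions of chains, since the chain union is still a two-sided ideal whose intersection with $\gamma_{n+1}^{\Lie}(\Lieg)$ remains trivial. Hence Zorn's lemma yields a maximal element $\LieI \in \mathcal{F}$.

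Next, since $\LieI \cap \gamma_{n+1}^{\Lie}(\Lieg) = 0$, Lemma \ref{Lem1} \emph{d)} gives immediately that $\Lieg$ is $n$-$\Lie$-isoclinic to $\Lieh := \Lieg/\LieI$. So it only remains to check that $\Lieh$ is $n$-$\Lie$-stem. By Lemma \ref{stem}, this is equivalent to showing that every nonzero two-sided ideal of $\Lieh$ meets $\gamma_{n+1}^{\Lie}(\Lieh)$ nontrivially. Take such a nonzero ideal, written as $\LieJ/\LieI$ with $\LieI \subsetneq \LieJ \trianglelefteq \Lieg$. By maximality of $\LieI$ in $\mathcal{F}$, the ideal $\LieJ$ is no longer in $\mathcal{F}$, so there exists a nonzero $x \in \LieJ \cap \gamma_{n+1}^{\Lie}(\Lieg)$. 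Such an $x$ cannot lie in $\LieI$ (otherwise $x \in \LieI \cap \gamma_{n+1}^{\Lie}(\Lieg) = 0$), so $x + \LieI$ is a nonzero element of $(\LieJ/\LieI) \cap (\gamma_{n+1}^{\Lie}(\Lieg)+\LieI)/\LieI$. Since $(\gamma_{n+1}^{\Lie}(\Lieg)+\LieI)/\LieI \subseteq \gamma_{n+1}^{\Lie}(\Lieh)$ (the image of the lower $\Lie$-central series lies inside the lower $\Lie$-central series of the quotient), this witnesses the required nontrivial intersection.

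The main obstacle is making sure maximality is invoked correctly: the maximal $\LieI$ is used only to guarantee that every strictly larger ideal meets $\gamma_{n+1}^{\Lie}(\Lieg)$ nontrivially, which is precisely the input needed to verify the Lemma \ref{stem} criterion in the quotient. Everything else — existence of a maximal element, invariance of $\gamma_{n+1}^{\Lie}$ under the quotient map, and the isoclinism — is a direct application of the machinery already developed.
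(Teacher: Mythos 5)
Your proposal is correct and follows essentially the same route as the paper: Zorn's lemma applied to the same family of ideals with trivial intersection with $\gamma_{n+1}^{\Lie}(\Lieg)$, Lemma \ref{Lem1} \emph{d)} for the isoclinism, and Lemma \ref{stem} for the stem property, which you verify in contrapositive form (a nonzero ideal of the quotient must meet $\gamma_{n+1}^{\Lie}$) while the paper argues the direct implication. Your explicit check that chains in the family have upper bounds is a small point of extra care that the paper omits.
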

\begin{proof}
 Consider the set $\LieM=\{\LieI~|~\mbox{two-sided ideal of}~ \Lieg~\mbox{ satisfying}~ \LieI\cap\gamma_{n+1}^{\Lie}(\Lieg)=0\}.$ This set contains the ideal $\LieI=0.$ So $\LieM$ is  non-empty and  partially ordered by set inclusion.  By Zorn's lemma, it contains a maximal two-sided ideal, call it $\Liem.$ By the property {\it d)} of Lemma \ref{Lem1}, it follows that  $\Lieg$ is $n$-$\Lie$-isoclinic to $\Lieg/\Liem$ since $\Liem\cap\gamma_{n+1}^{\Lie}(\Lieg)=0.$ It remains to show that $\Lieg/\Liem$ is a $n$-$\Lie$-stem Leibniz algebra. Let $\LieI$ be an arbitrary two-sided ideal of $\Lieg$ containing $\Liem$ and  satisfying $\frac{\LieI}{\Liem}\cap\gamma_{n+1}^{\Lie}(\frac{\Lieg}{\Liem})=\Liem.$ By Lemma \ref{stem}, it is enough to show that $\LieI\subseteq\Liem.$  First, we prove that $\LieI\cap\gamma_{n+1}^{\Lie}(\Lieg)=0.$  Let $x\in\LieI\cap\gamma_{n+1}^{\Lie}(\Lieg),$ then $x+\Liem\in\frac{\LieI}{\Liem}\cap\frac{\gamma_{n+1}^{\Lie}(\Lieg)}{\Liem}=\frac{\LieI}{\Liem}\cap\gamma_{n+1}^{\Lie}(\frac{\Lieg}{\Liem})=\Liem.$ So $x\in\Liem$ and thus $x\in\Liem\cap\gamma_{n+1}^{\Lie}(\Lieg)=0.$ Hence $\LieI\cap\gamma_{n+1}^{\Lie}(\Lieg)=0.$
This implies that $\LieI\in\LieM,$ therefore $\LieI\subseteq\Liem$ by maximality of $\Liem.$
\end{proof}
\bigskip

Recall from \cite[Definition 5.4]{Bar}  that the Frattini subalgebra of an algebra {\Lieg} is $\Phi(\Lieg)=\underset{\Liem\in\LieS}\bigcap\Liem$, where $\LieS$ is the set of all maximal subalgebras of $\Lieg.$

 The following classifies  $n$-\Lie-stem Leibniz algebras with trivial Frattini subalgebras.

\begin{Pro}
Let $\Lieg$ be a  Leibniz algebra. If the Frattini subalgebra of $\Lieg$ is trivial, then up to isomorphism, there is only one $n$-\Lie-stem Leibniz algebra that is $n$-\Lie-isoclinic to $\Lieg.$  This $n$-\Lie-stem Leibniz algebra is precisely $\frac{\Lieg}{{{\ze}_{n}^{\Lie}({\Lieg)}}}.$
\end{Pro}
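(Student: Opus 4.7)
The proof splits naturally into three tasks, all of which hinge on a single key lemma: the inclusion
$$
\ze_n^{\Lie}(\Lieg) \cap \gamma_{n+1}^{\Lie}(\Lieg) \subseteq \Phi(\Lieg),
$$
which under our hypothesis $\Phi(\Lieg)=0$ forces $\ze_n^{\Lie}(\Lieg) \cap \gamma_{n+1}^{\Lie}(\Lieg)=0$. I would establish this by adapting the classical Lie-algebra argument: given $z$ in the intersection and a maximal subalgebra $M$ of $\Lieg$ with $z\notin M$, maximality gives $\langle M,z\rangle=\Lieg$, and the goal is to conclude $\gamma_{n+1}^{\Lie}(\Lieg)\subseteq M$, contradicting $z\in\gamma_{n+1}^{\Lie}(\Lieg)\setminus M$. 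The $n$-Lie-centrality $z\in\ze_n^{\Lie}(\Lieg)$ says every $n$-fold iterated $[-,-]_{lie}$ bracket starting from $z$ vanishes, which by multilinear expansion kills all terms in $\gamma_{n+1}^{\Lie}(\langle M,z\rangle)$ that involve $z$, leaving only brackets internal to $M$.

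Once the key inclusion is in hand, Lemma~\ref{Lem1}(d) applied with $\LieI=\ze_n^{\Lie}(\Lieg)$ immediately gives the $n$-\Lie-isoclinism $\Lieg\underset{n}\sim\Lieg/\ze_n^{\Lie}(\Lieg)$. To see $\Lieh:=\Lieg/\ze_n^{\Lie}(\Lieg)$ is $n$-\Lie-stem, I invoke Lemma~\ref{stem}: suppose $\bar{\LieI}=\LieI/\ze_n^{\Lie}(\Lieg)$ is a nonzero ideal of $\Lieh$ with $\bar{\LieI}\cap\gamma_{n+1}^{\Lie}(\Lieh)=0$. Since $\gamma_{n+1}^{\Lie}(\Lieh)=(\gamma_{n+1}^{\Lie}(\Lieg)+\ze_n^{\Lie}(\Lieg))/\ze_n^{\Lie}(\Lieg)$, this pulls back to $\LieI\cap\gamma_{n+1}^{\Lie}(\Lieg)\subseteq\ze_n^{\Lie}(\Lieg)\cap\gamma_{n+1}^{\Lie}(\Lieg)=0$; then the same calculation used in the proof of Lemma~\ref{stem} forces $\LieI\subseteq\ze_n^{\Lie}(\Lieg)$, i.e.~$\bar{\LieI}=0$, a contradiction.

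For uniqueness, let $\Lieh'$ be any $n$-\Lie-stem Leibniz algebra $n$-\Lie-isoclinic to $\Lieg$ via $(\eta,\xi):\Lieg\underset{n}\sim\Lieh'$. Proposition~\ref{xieta}(b) gives
$$
\xi\!\left(\ze_n^{\Lie}(\Lieg)\cap\gamma_{n+1}^{\Lie}(\Lieg)\right)=\ze_n^{\Lie}(\Lieh')\cap\gamma_{n+1}^{\Lie}(\Lieh'),
$$
and by the key lemma the left-hand side is $0$, so $\ze_n^{\Lie}(\Lieh')\cap\gamma_{n+1}^{\Lie}(\Lieh')=0$. Combining this with the $n$-\Lie-stem inclusion $\ze_n^{\Lie}(\Lieh')\subseteq\gamma_{n+1}^{\Lie}(\Lieh')$ yields $\ze_n^{\Lie}(\Lieh')=0$. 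Hence $\eta$ becomes an isomorphism $\Lieg/\ze_n^{\Lie}(\Lieg)\xrightarrow{\ \sim\ }\Lieh'/\ze_n^{\Lie}(\Lieh')=\Lieh'$, completing the proof.

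The main obstacle is the first step. In the Lie-algebra case one simply notes that $[L,L]=[M+\mathbb{K}z,M+\mathbb{K}z]=[M,M]\subseteq M$ because $z$ is central. For a Leibniz algebra the non-symmetry of $[-,-]$ and the fact that $[-,-]_{lie}$ always lands in $\Lieg^{\mathrm{ann}}$ mean the reduction is subtler: one must expand each nested $n$-fold $[-,-]_{lie}$ bracket in $\langle M,z\rangle$ and use the vanishing relations from $z\in\ze_n^{\Lie}(\Lieg)$ applied at whichever slot $z$ appears, confirming that every such bracket lies in $\gamma_{n+1}^{\Lie}(M)\subseteq M$. This bookkeeping is where the real work lies; everything else is a clean application of the machinery built in Sections~\ref{preliminaries} and~\ref{Lie iso Lb alg}.
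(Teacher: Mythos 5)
Your proposal follows the paper's own proof in all essentials: the same key inclusion $\ze_n^{\Lie}(\Lieg)\cap\gamma_{n+1}^{\Lie}(\Lieg)\subseteq\Phi(\Lieg)=0$, and the same uniqueness argument via Proposition \ref{xieta} b) combined with the stem inclusion $\ze_n^{\Lie}(\Lieh')\subseteq\gamma_{n+1}^{\Lie}(\Lieh')$ to force $\ze_n^{\Lie}(\Lieh')=0$ and hence $\Lieh'\cong\Lieh'/\ze_n^{\Lie}(\Lieh')\cong\Lieg/\ze_n^{\Lie}(\Lieg)$. Your explicit check that $\Lieg/\ze_n^{\Lie}(\Lieg)$ is itself an $n$-\Lie-stem algebra $n$-\Lie-isoclinic to $\Lieg$ (via Lemma \ref{Lem1} d) and Lemma \ref{stem}) is a small, correct addition; the paper leaves existence to Corollary \ref{existence} and only shows that any stem representative has trivial $n$-\Lie-center.

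The one place where your route diverges, and where it would run into trouble as written, is the proof of the key inclusion. You fix a single element $z\in\ze_n^{\Lie}(\Lieg)\cap\gamma_{n+1}^{\Lie}(\Lieg)$ with $z\notin\Liem$ and work inside $\langle\Liem,z\rangle=\Lieg$. But $\Liem+\mathbb{K}z$ need not be a subalgebra: membership in $\ze_n^{\Lie}(\Lieg)$ only controls the symmetrized brackets $[g,z]_{lie}$, not the half-brackets $[g,z]$, $[z,g]$ or $[z,z]$, so $\langle\Liem,z\rangle$ may contain elements that are not of the form $m+\lambda z$, and the multilinear expansion of an $(n+1)$-fold bracket over such generators is not available. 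The paper instead runs the maximality dichotomy against the whole ideal: either $\ze_n^{\Lie}(\Lieg)\subseteq\Liem$, in which case $z\in\Liem$ at once, or $\Liem+\ze_n^{\Lie}(\Lieg)=\Lieg$, which \emph{is} a subalgebra because $\ze_n^{\Lie}(\Lieg)$ is a two-sided ideal; then every element of $\Lieg$ genuinely splits as $m+z'$ and the expansion you describe can be carried out, the purely-$\Liem$ terms landing in $\gamma_{n+1}^{\Lie}(\Liem)\subseteq\Liem$ and the cross terms being handled using $\gamma_{n+1}^{\Lie}(\ze_n^{\Lie}(\Lieg))=0$ and the vanishing of $n$-fold iterated $lie$-brackets out of $\ze_n^{\Lie}(\Lieg)$. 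So you should replace $\langle\Liem,z\rangle$ by $\Liem+\ze_n^{\Lie}(\Lieg)$. Even then, be aware that a $\ze_n^{\Lie}$-entry occupying slot $j\geq 3$ of a left-normed bracket is only seen to land in $\ze_{j-2}^{\Lie}(\Lieg)$ by the naive descent count, so the bookkeeping you defer still requires a commutator--centralizer estimate; this is precisely the point at which the paper's own proof is also terse.
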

\begin{proof}
Let $\Lies$ be a $n$-\Lie-stem Leibniz algebra that is $n$-\Lie-isoclinic to $\Lieg.$ Then by definition, $\frac{\Lies}{{{\ze}_{n}^{\Lie}({\Lies)}}}\overset{\xi}\cong\frac{\Lieg}{{{\ze}_{n}^{\Lie}({\Lieg)}}}.$ So it is enough to prove that ${{\ze}_{n}^{\Lie}({\Lies)}}=0.$ Let $\Liem$ be a maximal subalgebra of $\Lieg.$ Then either  ${{\ze}_{n}^{\Lie}({\Lieg)}}\subseteq \Liem$ or ${{\ze}_{n}^{\Lie}({\Lieg)}}+\Liem=\Lieg.$ Since $\gamma_{n+1}^{\Lie}({{\ze}_{n}^{\Lie}({\Lieg)}})=0,$ it follows that $ {{\ze}_{n}^{\Lie}({\Lieg})} \cap \gamma_{n+1}^{\Lie}(\Lieg)\subseteq \Liem  + \gamma_{n+1}^{\Lie}(\Liem)=\Liem. $  So by the property {\it b)} of Proposition \ref{xieta}, we have $\xi \left( {{\ze}_{n}^{\Lie}({\Lies})} \cap \gamma_{n+1}^{\Lie}(\Lies)  \right) = {{\ze}_{n}^{\Lie}({\Lieg})}  \cap \gamma_{n+1}^{\Lie}(\Lieg) \subseteq \underset{\Liem\in\LieS}\bigcap\Liem=\Phi(\Lieg)=0.$ Thus ${{\ze}_{n}^{\Lie}({\Lies})} \cap \gamma_{n+1}^{\Lie}(\Lies) =0$ as $\xi$ is an isomorphism. Therefore ${{\ze}_{n}^{\Lie}({\Lies})} =0$ by  Lemma \ref{stem} since $\Lies$ is a $n$-\Lie-stem Leibniz algebra .
\end{proof}

\begin{Th}\label{minimal}
Let  {\Lieg} be a Leibniz algebra. A finite-dimensional Leibniz algebra {\Lieq}, such that ${\Lieq} \underset{n} \sim {\Lieg}$, is a $n$-{\Lie}-stem Leibniz algebra if and only if
${\rm dim}(\mathfrak{q})= {\rm min} \lbrace {\rm dim}(\Lieh)
 | {\Lieh} \underset{n} \sim {\Lieg} \rbrace$.
\end{Th}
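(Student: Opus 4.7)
The plan is to express the dimension of every finite-dimensional member of the $n$-$\Lie$-isoclinism class of $\Lieg$ in terms of two invariants of the class, and read off both implications at once. Concretely, for any finite-dimensional $\Lieh\underset{n}\sim\Lieg$ I would decompose
$$
\dim(\Lieh) \;=\; \dim\!\bigl(\Lieh/\ze_n^{\Lie}(\Lieh)\bigr) \;+\; \dim\!\bigl(\ze_n^{\Lie}(\Lieh)\bigr)
$$
and bound each summand in turn.

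The first summand is the class invariant $a := \dim(\Lieg/\ze_n^{\Lie}(\Lieg))$, because any isoclinism $(\eta,\xi):\Lieh\underset{n}\sim\Lieg$ supplies an isomorphism of the quotients; this is finite because $\Lieq$ is. For the second summand I would combine the trivial inequality
$$
\dim\!\bigl(\ze_n^{\Lie}(\Lieh)\bigr) \;\geq\; \dim\!\bigl(\ze_n^{\Lie}(\Lieh)\cap\gamma_{n+1}^{\Lie}(\Lieh)\bigr)
$$
with Proposition \ref{xieta}\,(b), which says that $\xi$ restricts to an isomorphism $\ze_n^{\Lie}(\Lieh)\cap\gamma_{n+1}^{\Lie}(\Lieh) \cong \ze_n^{\Lie}(\Lieg)\cap\gamma_{n+1}^{\Lie}(\Lieg)$. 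Hence the right-hand dimension equals the class invariant $b := \dim(\ze_n^{\Lie}(\Lieg)\cap\gamma_{n+1}^{\Lie}(\Lieg))$, finite because $\gamma_{n+1}^{\Lie}(\Lieg)\cong\gamma_{n+1}^{\Lie}(\Lieq)$ is. Together these give the universal lower bound
$$
\dim(\Lieh) \;\geq\; a+b \qquad \text{for every } \Lieh \underset{n}\sim \Lieg,
$$
with equality if and only if $\ze_n^{\Lie}(\Lieh) \subseteq \gamma_{n+1}^{\Lie}(\Lieh)$, i.e.\ $\Lieh$ is an $n$-$\Lie$-stem Leibniz algebra.

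From here both directions of the theorem are immediate. If $\Lieq$ is $n$-$\Lie$-stem, the equality case yields $\dim(\Lieq)=a+b$, the value of the lower bound, so $\Lieq$ realises the minimum. Conversely, if $\dim(\Lieq)$ is minimal, I would apply Corollary \ref{existence} to $\Lieq$ to produce a finite-dimensional $n$-$\Lie$-stem quotient $\Lieq/\Liem$ in the class; it satisfies $\dim(\Lieq/\Liem)=a+b\leq\dim(\Lieq)$, while the lower bound forces $\dim(\Lieq)\geq a+b$, so $\dim(\Lieq)=a+b$ and the equality criterion forces $\Lieq$ to be stem. I do not foresee any genuine obstacle here: the substantive ingredient is Proposition \ref{xieta}\,(b), which transfers the intersection $\ze_n^{\Lie}\cap\gamma_{n+1}^{\Lie}$ across the isoclinism, and everything else is a routine dimension count plus a single invocation of Corollary \ref{existence} to secure a stem witness of dimension $a+b$.
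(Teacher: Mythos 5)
Your proposal is correct and follows essentially the same route as the paper: the same decomposition $\dim(\Lieh)=\dim(\Lieh/\ze_n^{\Lie}(\Lieh))+\dim(\ze_n^{\Lie}(\Lieh))$, the same identification of $\dim(\ze_n^{\Lie}(\Lieh)\cap\gamma_{n+1}^{\Lie}(\Lieh))$ as a class invariant (you cite Proposition \ref{xieta}(b) directly where the paper re-derives it via a chain of quotient isomorphisms), and the same appeal to Corollary \ref{existence} to produce a stem witness for the converse. No gaps; the packaging as a universal lower bound $a+b$ with equality exactly at stem algebras is a clean but equivalent presentation.
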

\begin{proof}
Let ${\Lieq} \underset{n} \sim {\Lieg}$  and ${\Lieh} \underset{n} \sim {\Lieg}$ be  and assume that {\Lieq}  is a finite-dimensional $n$-{\Lie}-stem Leibniz algebra. Then we have
\begin{align*}
\frac{\gamma_{n+1}^{\Lie}({\Lieh})}{\gamma_{n+1}^{\Lie}({\Lieh}) \cap {\ze}_n^{\Lie}(\Lieh)}&\cong \frac{\gamma_{n+1}^{\Lie}({\Lieh})+{\ze}_n^{\Lie}(\Lieh)}{{\ze}_n^{\Lie}(\Lieh)}\\
&\cong \frac{\gamma_{n+1}^{\Lie}({\Lieh})}{{\ze}_n^{\Lie}(\Lieh)}\\
&\cong \gamma_{n+1}^{\Lie} \left( \frac{{\Lieh}}{{\ze}_n^{\Lie}(\Lieh)} \right)\\
&\cong \gamma_{n+1}^{\Lie} \left( \frac{{\Lieq}}{{\ze}_n^{\Lie}(\Lieq)} \right)\\
&\cong \frac{\gamma_{n+1}^{\Lie}({\Lieq})}{{\ze}_n^{\Lie}(\Lieq)},
\end{align*}
and $\gamma_{n+1}^{\Lie}({\Lieq}) \cong \gamma_{n+1}^{\Lie}({\Lieh})$. So ${\rm dim}\left({\ze}_n^{\Lie}(\Lieq)\right)= {\rm dim} \left(\gamma_{n+1}^{\Lie}({\Lieh}) \cap {\ze}_n^{\Lie}(\Lieh) \right)\leq {\rm dim} \left( {\ze}_n^{\Lie}(\Lieh)\right)$.
On the other hand $\frac{\Lieh}{{\ze}_n^{\Lie}(\Lieh)} \cong \frac{\Lieq}{{\ze}_n^{\Lie}(\Lieq)}$. Therefore ${\rm dim} \left( \Lieq \right) \leq  {\rm dim} \left( \Lieh \right)$.

Conversely, let ${\Lieq} \underset{n} \sim {\Lieg}$  such that {\Lieq} has the minimum dimension. Owing to Corollary \ref{existence} there is a two-sided ideal {\Lieh} of {\Lieq} contained in ${\ze}_n^{\Lie}(\Lieq)$ such that $\Lieq \underset{n}\sim \frac{\Lieq}{\Lieh}$ and ${\ze}_n^{\Lie}(\Lieq)= \left(\gamma_{n+1}^{\Lie}(\Lieq) \cap {\ze}_n^{\Lie}(\Lieq) \right)\oplus  {\Lieh}$. But {\Lieq} has minimum dimension, which implies that $\Lieh=0$, therefore ${\ze}_n^{\Lie}(\Lieq)\subseteq \gamma_{n+1}^{\Lie}(\Lieq)$ and this completes the proof.
\end{proof}

\begin{Th}
If {\Lieg} and {\Lieq} are two $n$-{\Lie}-isoclinic $n$-{\Lie}-stem  Leibniz algebras then ${\ze}_n^{\Lie}(\Lieg)\cong {\ze}_n^{\Lie}(\Lieq)$.
\end{Th}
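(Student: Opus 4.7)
The plan is to obtain the isomorphism as a restriction of the map $\xi$ coming from the given $n$-$\Lie$-isoclinism, using Proposition \ref{xieta} (b) together with the $n$-$\Lie$-stem hypothesis to identify the two $\Lie$-centers with the intersections appearing in that proposition.

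More precisely, assume $(\eta,\xi):\Lieg\underset{n}\sim\Lieq$ is a $n$-$\Lie$-isoclinism, so in particular $\xi:\gamma_{n+1}^{\Lie}(\Lieg)\to\gamma_{n+1}^{\Lie}(\Lieq)$ is an isomorphism of vector spaces. By Proposition \ref{xieta} (b) this restricts to a linear isomorphism
\[
\xi\bigl({\ze}_n^{\Lie}(\Lieg)\cap\gamma_{n+1}^{\Lie}(\Lieg)\bigr)={\ze}_n^{\Lie}(\Lieq)\cap\gamma_{n+1}^{\Lie}(\Lieq).
\]
Now use the $n$-$\Lie$-stem hypothesis: since ${\ze}_n^{\Lie}(\Lieg)\subseteq\gamma_{n+1}^{\Lie}(\Lieg)$ and ${\ze}_n^{\Lie}(\Lieq)\subseteq\gamma_{n+1}^{\Lie}(\Lieq)$, the two intersections above simplify to ${\ze}_n^{\Lie}(\Lieg)$ and ${\ze}_n^{\Lie}(\Lieq)$ respectively. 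Hence the restriction of $\xi$ is a linear isomorphism ${\ze}_n^{\Lie}(\Lieg)\to{\ze}_n^{\Lie}(\Lieq)$.

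To upgrade this to an isomorphism of Leibniz algebras, one just observes that both ${\ze}_n^{\Lie}(\Lieg)$ and ${\ze}_n^{\Lie}(\Lieq)$ are abelian with respect to the $\Lie$-bracket in the sense that $[z,z']_{lie}=0$ whenever one of $z,z'$ lies in the first $\Lie$-center ${\ze}_1^{\Lie}$, and in general elements of ${\ze}_n^{\Lie}$ acquire sufficient central behaviour so that the restricted $\xi$ trivially preserves brackets. In fact the relevant brackets are inherited from the ambient $\gamma_{n+1}^{\Lie}$-isomorphism, so bracket-preservation on the restricted domain is automatic from the commutativity of diagram (\ref{square isoclinic}). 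Thus $\xi|_{{\ze}_n^{\Lie}(\Lieg)}$ is the desired isomorphism of Leibniz algebras.

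The only nontrivial step is the appeal to Proposition \ref{xieta} (b); the remainder is a one-line identification using the stem hypothesis. No obstacle is expected beyond checking that the restriction of $\xi$ is a homomorphism of Leibniz algebras, which follows either from the compatibility (\ref{square isoclinic}) or directly from the fact that $\xi$ is already a restriction of the ambient homomorphism structure captured by the isoclinism data.
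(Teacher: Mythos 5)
Your proof is correct, and it reaches the conclusion by a shorter route than the paper. You invoke Proposition \ref{xieta} \emph{b)}, which already states the set equality $\xi\bigl({\ze}_{n}^{\Lie}(\Lieg)\cap\gamma_{n+1}^{\Lie}(\Lieg)\bigr)={\ze}_{n}^{\Lie}(\Lieq)\cap\gamma_{n+1}^{\Lie}(\Lieq)$, and then the stem hypothesis collapses both intersections to the full $n$-$\Lie$-centers, so injectivity of $\xi$ finishes everything in one line. The paper instead uses Proposition \ref{xieta} \emph{a)} only to get the containment $\xi\bigl({\ze}_{n}^{\Lie}(\Lieg)\bigr)\subseteq{\ze}_{n}^{\Lie}(\Lieq)$, builds the commutative ladder of short exact sequences $0\to{\ze}_n^{\Lie}\to\gamma_{n+1}^{\Lie}\to\gamma_{n+1}^{\Lie}/{\ze}_n^{\Lie}\to 0$, and then appeals to the Snake Lemma for surjectivity of the restriction and to a pull-back argument for injectivity. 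Your version buys economy: part \emph{b)} was proved in the paper precisely by the two-sided argument that the Snake Lemma is re-doing here, so citing it directly avoids redundant machinery; the paper's version makes the structure of the comparison of the two central series more visible. One cosmetic remark: your paragraph about ``upgrading'' the linear isomorphism to an algebra isomorphism is more laboured than it needs to be (and the aside about $[z,z']_{lie}=0$ for elements of ${\ze}_1^{\Lie}$ is irrelevant to ${\ze}_n^{\Lie}$); since $\xi$ is by Definition \ref{isoclinic} an isomorphism of Leibniz algebras $\gamma_{n+1}^{\Lie}(\Lieg)\to\gamma_{n+1}^{\Lie}(\Lieq)$ and ${\ze}_n^{\Lie}(\Lieg)$ is a subalgebra contained in its domain by the stem hypothesis, the restriction is automatically an algebra isomorphism onto its image, and nothing more needs to be said.
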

\begin{proof}
Let {\Lieg} and {\Lieq} be two $n$-{\Lie}-isoclinic $n$-{\Lie}-stem  Leibniz algebras. In view of proof of Theorem \ref{minimal}  and isomorphism $\xi: \gamma_{n+1}^{\Lie} (\Lieg) \longrightarrow \gamma_{n+1}^{\Lie} (\Lieq)$ we have the following commutative diagram with exact rows:
\[
\xymatrix{
0 \ar[r] & {\ze}_n^{\Lie}(\Lieg) \ar[r] \ar[d]^{\xi_{\mid}} & \gamma_{n+1}^{\Lie}(\Lieg) \ar[r] \ar[d]^{\cong} & \frac{ \gamma_{n+1}^{\Lie}(\Lieg) }{{\ze}_n^{\Lie}(\Lieg)} \ar[r] \ar[d]^{\cong} & 0\\
0 \ar[r] & {\ze}_n^{\Lie}(\Lieq) \ar[r]  & \gamma_{n+1}^{\Lie}(\Lieq) \ar[r] & \frac{ \gamma_{n+1}^{\Lie}(\Lieq) }{{\ze}_n^{\Lie}(\Lieq)} \ar[r] & 0
}
\]
where $\xi \left( {\ze}_n^{\Lie}(\Lieg) \right) \subseteq {\ze}_n^{\Lie}(\Lieq)$ since for all $x\in  \gamma_{n+1}^{\Lie}(\Lieg)$, $\eta \left(x+{\ze}_n^{\Lie}(\Lieg)\right) = \xi(x) +  {\ze}_n^{\Lie}(\Lieq)$. Hence, for $x \in {\ze}_n^{\Lie}(\Lieg)$, $0 = \eta  \left( \pi_{\Lieg}(x) \right) = \xi(x) + {\ze}_n^{\Lie}(\Lieq)$ (here $\pi_{\Lieg} : {\Lieg} \twoheadrightarrow {\Lieg}/{\ze}_n^{\Lie}({\Lieg})$ denotes the canonical projection), then $\xi(x) \in {\ze}_n^{\Lie}(\Lieq)$.
Now the Snake Lemma yields  $\xi_{|}$ is a surjective homomorphism and so $\xi ( {\ze}_n^{\Lie}({\Lieg}))= {\ze}_n^{\Lie}({\Lieq})$. Moreover the left hand square is a pull-back diagram, then  $\xi_{\mid}$ is an injective homomorphism. Hence the isomorphism  ${\ze}_n^{\Lie}(\Lieg)\cong {\ze}_n^{\Lie}(\Lieq)$.
\end{proof}

%---------------------------------------------------------------------------------------------------

\section*{Acknowledgements}

Second author was supported by  Agencia Estatal de Investigación (Spain), grant MTM2016-79661-P (AEI/FEDER, UE, support included).

%---------------------------------------------------------------------------------------

\begin{center}

\end{center}

\end{document}